\theoremstyle{plain}
\newtheorem{theorem}{Theorem}[section]
\newtheorem{lemma}[theorem]{Lemma}
\newtheorem{proposition}[theorem]{Proposition}
\newtheorem{corollary}[theorem]{Corollary}
\newtheorem{question}[theorem]{Question}
\theoremstyle{definition}
\newtheorem{definition}[theorem]{Definition}
\newtheorem{example}[theorem]{Example}
\newtheorem{remark}[theorem]{Remark}
\theoremstyle{remark}
\def\Aut{\operatorname{Aut}}
\def\Bir{\operatorname{Bir}}
\def\End{\operatorname{End}}
\def\Der{\operatorname{Der}}
\def\LND{\operatorname{LND}}
\def\Frac{\operatorname{Frac}}
\def\Lie{\operatorname{Lie}}
\def\Spec{\operatorname{Spec}}
\def\ord{\operatorname{ord}}
\def\CC{{\mathbb C}}
\def\AA{{\mathbb A}}
\def\NN{{\mathbb N}}
\def\K{{\mathbb K}}
\def\LL{{\mathbb L}}
\def\FF{{\mathbb F}}
\def\G{{\mathbb G}}
\def\cO{{\mathcal O}}
\def\embed{\hookrightarrow}
\def\J{{\mathcal J}}
\newcommand\thankssymb[1]{\lowercase{\textsuperscript{\@alph{#1}}}}
\address{
HSE University, Faculty of Computer Science\\
Pokrovsky blvd. 11, Moscow, 109028
Russia
}
\email{a@perep.ru}
\DeclareMathOperator{\id}{id}
\DeclareMathOperator{\Jonq}{Jonq}
\DeclareMathOperator{\jonq}{\mathfrak{jonq}}
\begin{document}

\title[Nested automorphism subgroups]{Structure of connected nested automorphism groups}

\author{Alexander Perepechko}
\thanks{
   The research was supported by the grant RSF-DST 22-41-02019.}

\begin{abstract} 
   In this article, we describe the maximal unipotent subgroups of $\mathrm{Aut}(X)$, where $X$ is an affine algebraic variety. Every subgroup of this type has a structure analogous to that of the group of triangular automorphisms of $\mathbb{A}^n$. In particular, it is \emph{nested}, that is, a countable increasing union of algebraic subgroups.

We show that a subgroup $G\subset\mathrm{Aut}(X)$ consisting of unipotent elements is closed if and only if it is nested. This implies that a connected nested subgroup of $\mathrm{Aut}(X)$ is closed, thus answering a question posed by Kraft and Zaidenberg (2022).

We also extend the recent description of maximal commutative unipotent subgroups of $\mathrm{Aut}(X)$ due to Regeta and van Santen (2024), by providing a direct construction of such subgroups within our approach.
\end{abstract}

\maketitle

\section{Introduction}

The cornerstone of the structure theory of algebraic groups, the Lie--Kolchin triangularization theorem, states that any unipotent subgroup of a matrix group $\mathrm{GL}_n(\CC)$ is conjugate to a subgroup of upper triangular matrices. 
Automorphism groups of algebraic varieties are a natural generalization of linear groups, yet their structure is significantly more complex and far less understood. 
In particular, the notion of triangular automorphisms is defined only for the affine space, and there exist unipotent subgroups that are not triangularizable by conjugation.
We generalize this notion to an arbitrary affine variety so that an analogue of the Lie--Kolchin theorem holds.

Let $X$ be an irreducible affine algebraic variety over an algebraically closed field $\K$ of characteristic zero.
The group $\Aut(X)$ of regular automorphisms of $X$ carries the structure and topology of an ind-group. 
An ind-group is, informally, an infinite-dimensional analogue of an algebraic group; see~\cite{FK} for a precise treatment.
Recall that a subgroup $G \subset \Aut(X)$ is called \emph{nested} if it admits a countable ascending filtration by algebraic subgroups $G_i$.

The notion of unipotent subgroup can be naturally extended to an abstract subgroup of $\Aut(X)$. Namely, we call an element $g\in\Aut(X)$ \emph{unipotent} if it is contained in a subgroup of $\Aut(X)$ isomorphic to the additive group of the field $\G_a:=(\K,+)$; we call a subgroup $G\subset\Aut(X)$ \emph{unipotent} if it consists of unipotent elements.

In the case of $X=\AA^n$, there is a distinguished \emph{de Jonqui\`eres} subgroup of triangular automorphisms of $\AA^n$. 
Its unipotent radical is as follows:
\begin{equation}\label{eq:jonq-intro}
   \Jonq_u(n,\K)=\{(x_1,\ldots,x_n)\mapsto (x_1+P_0, x_2+P_1,\ldots,x_n+P_{n-1})\mid P_i\in\K[x_1,\ldots,x_i]\}.   
\end{equation}
This radical has the following remarkable property. 
\begin{theorem}[{\cite[Theorem D]{KZ}}]\label{th:KZ-intro}
   Let $U \subset \Aut(\AA^n)$ be a nested unipotent subgroup. If~$U$ has a dense orbit on $\AA^n$, then $U$ is conjugate to a subgroup of $\Jonq_u(n,\K)$.
\end{theorem}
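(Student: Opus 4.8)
The plan is to upgrade the density hypothesis to transitivity and then induct on $n$, peeling off one coordinate at each step by means of a one-parameter subgroup that is normal in the whole nested group. To begin, write $U=\bigcup_i U_i$ with $U_i$ algebraic unipotent, $U_i\subseteq U_{i+1}$, and fix a point $p$ with $\overline{U\cdot p}=\AA^n$. By the Kostant--Rosenlicht theorem the orbits of a unipotent algebraic group on an affine variety are closed, so each $U_i\cdot p$ is a closed irreducible subvariety of $\AA^n$; these form an ascending chain of bounded dimension, and a closed irreducible variety contained in another of the same dimension coincides with it, so the chain is eventually constant. Hence $U\cdot p=U_{i_0}\cdot p$ is closed and, being dense, equals $\AA^n$; thus $U_{i_0}$, and every $U_i$ with $i\ge i_0$, acts transitively on $\AA^n$.

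Next I would produce a one-parameter subgroup $N\cong\G_a$ normal in $U$. For $i\ge i_0$ set $H_i=\mathrm{Stab}_{U_i}(p)$, so that $H_{i_0}\subseteq H_{i_0+1}\subseteq\cdots$; each $H_i$ fixes $p$, hence acts on the projectivised tangent space $\mathbb{P}(T_p\AA^n)\cong\mathbb{P}^{n-1}$, and the fixed loci $\mathbb{P}(T_p\AA^n)^{H_i}$ form a descending chain of closed subsets of $\mathbb{P}^{n-1}$, hence stabilise from some index $i_1$ on. The transitive algebraic unipotent group $U_{i_1}$ has nontrivial centre, so it contains a central subgroup $N\cong\G_a$; as $N$ is central and $U_{i_1}$ transitive, $(\AA^n)^N$ is $U_{i_1}$-invariant, hence empty, so $N$ acts freely (in characteristic zero a fixed-point-free $\G_a$-action has trivial stabilisers). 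The partition of $\AA^n$ into the orbits of $N$ is a $U_{i_1}$-invariant foliation by lines, so its tangent line $\ell=T_p(N\cdot p)$ is fixed by $H_{i_1}$; by stabilisation $\ell$ is fixed by every $H_i$ with $i\ge i_1$, so this foliation --- being determined by the $U_i$-equivariant line field $g\cdot p\mapsto g_*\ell$ --- is $U_i$-invariant for all such $i$, hence $U$-invariant.

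Now I would form the quotient. The orbit space $\AA^n/N=U_{i_1}/NH_{i_1}$ is a homogeneous space of the unipotent group $U_{i_1}$, hence isomorphic to an affine space $\AA^{n-1}$, and it coincides with the categorical quotient; so $\pi\colon\AA^n\to\AA^{n-1}$ is a $\G_a$-torsor over an affine base, hence trivial by Serre's vanishing $H^1_{\mathrm{Zar}}(\AA^{n-1},\cO)=0$. Thus $\AA^n\cong\AA^{n-1}\times\AA^1$ with $N$ translating the line factor; and since any free $\G_a$-action on $\AA^{n-1}\times\AA^1$ whose orbits are the rulings has the form $(y,t)\mapsto(y,t+\lambda s)$ with $\lambda\in\cO(\AA^{n-1})^\times=\K^\times$, all such actions generate the same subgroup, so as $U$ permutes the $N$-orbits it normalises $N$. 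Being $N$-normal, $U$ acts on $\AA^{n-1}\cong\AA^n/N$; and the conjugation action of $U$ on $N\cong\G_a$ is a homomorphism $U\to\Aut(\G_a)=\G_m$, which is trivial because $U$ is unipotent, so in these coordinates every $u\in U$ has the form $(y,t)\mapsto(\overline u(y),\,t+b_u(y))$ with $b_u\in\K[\AA^{n-1}]$, i.e. acts on the base $Z:=\AA^{n-1}$ and by a translation on each fibre.

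Finally I would descend. The image $\overline U$ of $U$ in $\Aut(Z)$ is a transitive nested unipotent subgroup, so by the inductive hypothesis there is $\theta\in\Aut(Z)$ with $\theta\,\overline U\,\theta^{-1}\subseteq\mathrm{Jonq}(n-1)_u$ (the base case $n=1$ is immediate: $\Aut(\AA^1)=\G_a\rtimes\G_m$ has unipotent part $\mathrm{Jonq}(1)_u$). Lifting $\theta$ to $\AA^n\cong Z\times\AA^1$ by the identity on the line factor and conjugating $U$ by it, we land inside the group of automorphisms of $\AA^n$ that preserve $\pi$, act through $\mathrm{Jonq}(n-1)_u$ on the base and by translations on the fibres --- which is exactly $\mathrm{Jonq}(n)_u$. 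The step I expect to be genuinely delicate is the stabilisation-of-fixed-loci argument in the second paragraph, which is what forces the one-parameter subgroup of $U_{i_1}$ to be normal in all of $U$; once that is in hand, the remaining steps are essentially bookkeeping.
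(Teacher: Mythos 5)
Your proof is correct in substance, but note that it takes a genuinely different route from anything in the paper: the paper does not prove this statement at all, it imports it from \cite[Theorem D]{KZ}, and its own generalization (Theorem~\ref{th:U-dJ}) is obtained Lie-algebraically, by passing to $\Lie U\subset\LND(\K[\AA^n])$ and invoking Skutin's triangulation over the invariant field (Corollary~\ref{cor:Skutin}), which produces triangular coordinates after localization and needs no density hypothesis. Your argument is instead a geometric induction that uses density essentially: Kostant--Rosenlicht closedness of unipotent orbits upgrades density to transitivity; a central $\G_a$-subgroup $N$ of a transitive layer $U_{i_1}$ acts freely; and your stabilization trick in $\mathbb{P}(T_p\AA^n)$ is sound --- the line $\ell=T_p(N\cdot p)$ lies in the stabilized fixed locus, the translated line field is independent of $i\ge i_1$ because $U_{i_1}$ is already transitive, and it coincides with the tangent field of the $N$-foliation since $U_{i_1}$ centralizes $N$ --- so the foliation is $U$-invariant and, by your classification of free $\G_a$-actions along the rulings, $U$ in fact centralizes $N$ and the induction closes. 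What your approach buys is a self-contained, coordinate-geometric proof specific to $\AA^n$; what the Lie-algebraic route buys is the paper's Theorem~\ref{th:U-dJ} for arbitrary affine $X$ and non-dense orbits, where transitivity, the splitting $\AA^n\cong\AA^{n-1}\times\AA^1$ and the vanishing $H^1(\AA^{n-1},\cO)=0$ are unavailable. Three steps in your write-up deserve an explicit justification rather than a phrase, though none is a genuine gap: (1) ``uniqueness of integral curves'' is best replaced by an algebraic argument (once the product structure is in place, a closed irreducible curve everywhere tangent to the rulings has everywhere-vanishing differential of the projection to the base, hence constant projection in characteristic zero, hence is a ruling); (2) that $\pi\colon\AA^n\to\AA^{n-1}$ is a Zariski-locally trivial $\G_a$-torsor should be justified via the homogeneous fibration $U_{i_1}/H_{i_1}\to U_{i_1}/(NH_{i_1})$ and Rosenlicht's cross-section theorem for unipotent (split solvable) groups, before Serre vanishing is invoked; (3) triviality of the conjugation character $U\to\Aut(N)\cong\G_m$ should be checked on each algebraic piece $U_i$, where it is a morphism of algebraic groups from a unipotent group to a torus.
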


We establish a similar property for an arbitrary affine variety $X$ and a nested unipotent subgroup $U$ of $\Aut(X)$.
Let us briefly describe the construction. 

Let $\cO(X)$ denote the algebra of regular functions on $X$, and consider the subalgebra of $U$-invariants $\cO(X)^U$.
We find an element $h\in\cO(X)^U$ such that the localization $\cO(X)_h$ is a polynomial algebra over the subring $R:=\cO(X)_h^U$ in, say, $k$ variables. Then $U$ is contained in the group $\Jonq_u(k,R)\subset\Aut(X_h)$ obtained by substituting $R$ for $\K$ in \eqref{eq:jonq-intro}.

We define the \emph{width} of a subgroup $G\subset\Aut(X)$ as the dimension of a general $G$-orbit in $X$.
Then groups of the form 
\[\Jonq_u(k,\cO(X)_h^U)\cap\Aut(X)\subset\Aut(X)\] 
are exactly maximal subgroups among unipotent subgroups of a given width. 
We call them \emph{de Jonqui\`eres-like} subgroups (or, for short, \emph{dJ-like}).
The construction above provides a de Jonqui\`eres-like subgroup of the same width as the initial unipotent subgroup~$U$ (Proposition~\ref{pr:U-wide-dJ}).

The second major topic of this paper concerns the topology of automorphism subgroups.
In~\cite{KZ}, the authors explore structural and topological properties of various classes of algebraically generated subgroups. 
In particular, they ask in~\cite[Question 4]{KZ} whether a connected nested subgroup of $\Aut(X)$ is closed. 
This is motivated by the fact that algebraic subgroups of $\Aut(X)$ are closed.
The following theorem gives a positive answer to this question.

\begin{theorem}[{Theorem~\ref{th:nested-closed}}] \label{th:nested-closed-intro}
   A connected nested subgroup $G\subset\Aut(X)$ is closed.
\end{theorem}

Moreover, we establish the following criterion for a unipotent subgroup to be closed.

\begin{theorem}[{Theorem~\ref{th:nested=closed}}] \label{th:nested=closed-intro}
  Assume that the base field $\K$ is uncountable and consider a unipotent subgroup $U$ of $\Aut(X)$. Then the following are equivalent:
   \begin{enumerate}[(i)]
      \item $U$ is generated by its $\G_a$-subgroups;
      \item $U$ is nested;
      \item $U$ is closed in $\Aut(X)$.
   \end{enumerate}
\end{theorem}

We now describe the contents of the paper in more detail.
A central role is played by Lie subalgebras of derivations of $\cO(X)$ contained in the subset $\LND(\cO(X))$ of locally nilpotent derivations; these serve as the infinitesimal counterpart of nested unipotent subgroups.
We also introduce the notion of \emph{generic frames} --- tuples of pairwise commuting locally nilpotent derivations whose vector fields are linearly independent at a general point --- which play an equally central role.

In Section~\ref{sec:nested}, we show that any Lie subalgebra $L\subset\LND(\cO(X))$ embeds, after localization, into a de Jonqui\`eres subalgebra (Lemma~\ref{lm:L-structure}).
This leads to a bijective correspondence between Lie subalgebras in $\LND(\cO(X))$ and nested unipotent subgroups of $\Aut(X)$ via the exponential map (Proposition~\ref{pr:exp-nested}).
We extend this correspondence to an isomorphism of ind-varieties (Corollary~\ref{cr:jonq-closed}) and prove that a nested unipotent subgroup is closed both in the semigroup of regular endomorphisms $\End(X)$ and in $\Aut(X)$ (Proposition~\ref{pr:unipotent-closed}).
From this we deduce Theorem~\ref{th:nested-closed-intro}.
 In Section~\ref{sec:non-nested}, we prove Theorem~\ref{th:nested=closed-intro}.

In Section~\ref{sec:dJ-like}, we describe dJ-like subgroups in terms of generic frames (Proposition~\ref{pr:dJ-like=Jdi}), and we enumerate all dJ-like subgroups contained in a given one (Proposition~\ref{pr:J'<J}).
In Section~\ref{sec:max-ex}, we utilize this description to provide examples of maximal nested unipotent subgroups in dimensions 2 and 3.

In Appendix~\ref{sec:max-commut}, we utilize generic frames to describe maximal commutative unipotent subgroups (Proposition~\ref{pr:max-commut}), extending the recent description by Regeta and van Santen~\cite{RvS}, and to establish when a maximal commutative subgroup is contained in a given dJ-like subgroup (Proposition~\ref{pr:R<J}). We also deduce a maximality criterion for dJ-like subgroups (Proposition~\ref{pr:dJ-max-criterion}).

The author is grateful to Ivan Arzhantsev, Nikhilesh Dasgupta, Sergey Gaifullin, Vsevolod Gubarev, 
Neena Gupta, and Andriy Regeta for numerous useful discussions and remarks.
The author thanks Mikhail Zaidenberg for his lasting encouragement and motivation.
Finally, the author thanks the referees for their thorough reports that substantially shaped the final form of the paper.

\section{Preliminaries}\label{sec:prelim}

\subsection{Ind-groups}\label{sec:ind-groups}

The notion of an ind-group goes back to Shafarevich~\cite{Sh66}. 
We refer to~\cite{FK} and~\cite{KZ24} for a thorough introduction.

\begin{definition}
   An ind-variety $V$ is a set endowed with an ascending filtration
   $V_0 \embed V_1 \embed V_2 \embed \ldots \subset V$ such that:
   \begin{enumerate}[(i)]
   \item $V = \bigcup_{k \in \NN} V_k$;
   \item each $V_k$ is an algebraic variety;
   \item for all $k \in \NN$ the embedding $V_k \embed V_{k+1}$ is a closed immersion.
   \end{enumerate}
\end{definition}
   
   An ind-variety $V$ is called \emph{affine} if all $V_i$ are affine.
An ind-variety $V$ has a natural \emph{topology}: a subset $S \subset V$ is called open (resp. closed, locally closed) if $S_k := S \cap V_k \subset V_k$ is open (resp. closed, locally closed) for all $k \in \mathbb{N}$.
A closed subset $S \subset V$ has a natural structure of an ind-variety and is called an ind-subvariety.

A \emph{morphism} between ind-varieties $V = \bigcup_k V_k$ and $W = \bigcup_m W_m$ is a map $\phi: V \to W$ satisfying the following condition. For every $k \in \mathbb{N}$ there is an $m \in \mathbb{N}$ such that 
$\phi(V_k) \subset W_m$ and that the induced map $V_k \to W_m$ is a morphism of algebraic varieties. The product of ind-varieties $X=\bigcup_i X_i$ and $Y=\bigcup_i Y_i$ is defined as $\bigcup_i (X_i\times Y_i)$. 
 Recall the following definition.

\begin{definition}
   An \emph{ind-group} is an ind-variety with a group structure such that the multiplication and inverse maps are morphisms of ind-varieties. 
\end{definition}

If $H$ is a closed subgroup of an ind-group $G$, then $H$ is again an ind-group with respect to the induced ind-variety structure.
 
For an affine variety $X$ we use the ind-group structure on $\Aut(X)=\bigcup\Aut(X)_{\le d}$, where 
\[
   \Aut(X)_{\le d}=\{\psi\mid\deg\psi,\deg\psi^{-1}\le d\},
\]
where the degree is induced by an embedding $X\embed\AA^N$, e.g., see~\cite[Section~5.2]{FK}.

This structure has the universal property that a map $\rho\colon H\to\Aut(X)$ from an algebraic variety $H$ is a morphism if and only if the corresponding action map $H\times X\to X$ is a morphism, see~\cite[Theorem~5.1.1]{FK}.
Then $\rho$ is called an \emph{algebraic family of automorphisms} of $X$, see also~\cite{Ram}.
Moreover, if $H$ is an algebraic group, then the image $\rho(H)$ is closed.
Indeed, its closure is an algebraic subgroup that contains $\rho(H)$ as a dense constructible subset. 

Two ind-structures $V=\bigcup_i V_i$ and $V=\bigcup_i V_i^\prime$
are called \emph{equivalent} if the identity map $\bigcup_i V_i\to \bigcup_i V_i^\prime$ is an isomorphism of ind-varieties.
 An ind-group $G$ is called \emph{nested} if it admits an equivalent
filtration $G=\bigcup_i G_i$, where all $G_i$ are algebraic subgroups.

\subsection{Ind-structures for a ring}\label{sec:ind-ring}
An affine domain $R$ over $\K$ is endowed with a natural structure of an ind-variety via choosing an ascending sequence of finite-dimensional subspaces $V_1\subset V_2\subset\ldots\subset R$ such that $R=\bigcup_i V_i$. Note that for any $i,j>0$ the product $V_i V_j$ belongs to $V_k$ for some $k$.
In general, for any $\K$-space of countable dimension --- whether a subalgebra of regular functions or a Lie subalgebra of derivations --- we introduce an ind-structure on it by considering an arbitrary filtration by finite-dimensional subspaces.
Different filtrations give rise to equivalent ind-structures. 

The monoid of $\K$-endomorphisms $\End(R):=\End_\K(R)$ carries an induced ind-structure.
Indeed, let $r_1,\ldots,r_s\in R$ be a finite set of generators of $R$ as a $\K$-algebra. Then we have an embedding 
\[\End(R)\embed R^s,\, \phi\mapsto (\phi(r_1),\ldots,\phi(r_s)).\]
Its image is a closed ind-subvariety, see~\cite[Proposition~13.1.2]{FK}.

\subsection{Lie algebras of ind-groups}
For an ind-variety $V = \bigcup_{k \in \mathbb{N}} V_k$ we define the Zariski tangent space at $x \in V$ as follows. We have $x \in V_k$ for $k \ge k_0$, and
$T_x V_k \subset T_x V_{k+1}$ for $k \ge k_0$, and then we define
\[
  T_x V := \bigcup_{k \ge k_0} T_x V_k.
\]
Thus, $T_x V$ is a vector space of at most countable dimension. 

For an ind-group $G$, the tangent space $T_e G$ has a natural structure of a Lie algebra which is denoted by $\Lie G$; see~\cite[Section~4]{Ku} and~\cite[Section~2]{FK}. 
There is a natural embedding of $\Lie\Aut(X)$ into the Lie algebra of vector fields on $X$, see~\cite[Proposition~7.2.4]{FK}, and a canonical identification of the vector fields on $X$ and the ($\K$-linear) derivations of $\cO(X)$, see~\cite[Section~3.2]{FK}.
 
\subsection{Unipotent subgroups}
 An element $u\in\Aut(X)$ is called \emph{unipotent} if $u$ 
 belongs to an algebraic subgroup of $\Aut(X)$ isomorphic to $\mathbb{G}_a=(\K,+)$. A subgroup $G\subset\Aut(X)$ is called \emph{unipotent} if it consists of unipotent elements.

 \subsection{LNDs}
 By a derivation of the algebra of regular functions $\cO(X)$ we mean a $\K$-derivation and denote the algebra of all derivations of $\cO(X)$ by $\Der(\cO(X))$.
We denote by $\ker\partial$ the kernel of a derivation $\partial\in\Der(\cO(X))$ in $\cO(X)$, and by $\ker_{\K(X)}\partial$ the kernel of $\partial$ in the field of rational functions $\K(X)$. Then $\ker\partial\subset\cO(X)$ is an algebraically closed $\K$-subalgebra and $\ker_{\K(X)}\partial\subset\K(X)$ is an algebraically closed subfield, see~\cite[Propositions~1.9, 1.12]{Freudenburg}.
Given a subset $S\subset\Der(\cO(X))$, we also let $\ker S=\bigcap_{\partial\in S}\ker \partial$ and $\ker_{\K(X)} S=\bigcap_{\partial\in S}\ker_{\K(X)} \partial$.

A derivation $\partial\in\Der(\cO(X))$ is called \emph{locally nilpotent} if for every $f\in\cO(X)$ there exists $k\in\NN$ such that $\partial^k(f)=0$.
 We denote the subset of locally nilpotent derivations (LNDs for short) on $\cO(X)$ by $\LND(\cO(X))$.
    Two LNDs $\partial_1,\partial_2\in\Der\cO(X)$ are called \emph{equivalent} if $\ker\partial_1=\ker\partial_2$.
 It is known that if $\partial_1\sim\partial_2$, then $a\partial_1=b\partial_2$ for some $a,b\in\ker\partial_1=\ker\partial_2$. 
 Equivalently, $\partial_2=f\partial_1$ for some $f\in\ker_{\K(X)}\partial_1$. 
The subset $\LND(\cO(X))$ is a Lie subalgebra of $\Der(\cO(X))$ if and only if any two LNDs of $\cO(X)$ are equivalent, see, e.g.,~\cite{PR1}.

     Given a derivation $\partial\in \Der(\cO(X))$, we also consider the sets $\ker\partial\cdot\partial$ of ``replicas'' and $(\ker_{\K(X)}\partial)\cdot\partial$
    of ``rational replicas'' of $\partial$, see~\cite{RvS}.
Given a derivation $\partial\in\Der(\cO(X))$ and a rational function $f\in\ker_{\K(X)}\partial$, the product $f\partial\in \Der(\K(X))$ may not preserve $\cO(X)$.

The exponential map $\exp\colon \LND(\cO(X))\to\Aut(X)$ sends LNDs to unipotent elements, see, e.g.,~\cite[Section~1.1.9]{Freudenburg} and~\cite[Section~11.3]{FK}.
 By~\cite[Lemma 11.3.3]{FK}, $\exp$ is an $\Aut(X)$-equivariant bijective map from $\LND(\cO(X))$ to the set of unipotent elements of $\Aut(X)$. Its inverse is denoted by $\log$, see~\cite[Definition 11.3.4]{FK}.

 \subsection{De Jonqui\`eres subgroup}

      Consider a polynomial ring $R[x_1,\ldots,x_k]$ over a commutative domain $R$. 
      Recall that the \emph{de Jonqui\`eres subgroup} over $R$ is the subgroup
      \[
         \Jonq(k,R):=\{(x_1,\ldots,x_k)\mapsto (c_1x_1+P_1,\ldots,c_kx_k+P_k)\mid c_i\in R^\times, P_i\in R[x_1,\ldots,x_{i-1}]\}
      \]
      of $\Aut_R(R[x_1,\ldots,x_k])$. 
      We also use the notation $\Jonq(x_1,\ldots,x_k,R)$ to avoid ambiguity.
       It is a solvable nested subgroup, see~\cite[Remark~5.2.1]{KZ}.

      The \emph{unipotent radical} $\Jonq_u(k,R)$ consists of elements with $c_1=\ldots=c_k=1$ and has derived length $k$.
      Indeed, the derived series of $\Jonq_u(x_1,\ldots,x_k,R)$ is as follows:
      \[
      \Jonq_u(x_1,\ldots,x_k,R)\supset
      \Jonq_u(x_2,\ldots,x_k,R[x_1])\supset
      \Jonq_u(x_3,\ldots,x_k,R[x_1,x_2])\supset\ldots
      \]
      We denote the tangent algebra $\Lie\Jonq_u(k,R)$ by $\jonq_u(k,R)$. It equals 
      \[
        \jonq_u(k,R)=\bigoplus_{i=1}^k R[x_1,\ldots,x_{i-1}]\frac{\partial}{\partial x_i}.
      \]
 By~Theorem~\ref{th:KZ-intro}, any nested unipotent subgroup of $\Aut(\AA^n)$ that acts transitively on $\AA^n$ is conjugate to a subgroup of $\Jonq_u(n,\K)$.

\section{Nested Groups}\label{sec:nested}

\subsection{Lie subalgebras of LNDs}\label{sec:ln-subsets}
We recall results on locally nilpotent subsets following~\cite{Skutin}, see also expositions in~\cite{DaiLN} and~\cite{BPZ}.
Let $B$ be a unital commutative algebra that is an integral domain of finite transcendence degree over a field $\mathbb{L}$ of characteristic zero.

\begin{definition}[{\cite[Definition 2]{Skutin}}]\label{def:loc-nilp}
    A subset of $\LL$-derivations $T\subset\Der_\mathbb{L}(B)$ is said to be \emph{locally nilpotent} if for every $b\in B$ and every infinite sequence $\overline{d}=(d_1,d_2,\ldots)$ in $T$ there exists $k\in\NN$, depending on $b$ and $\overline{d}$, such that \[(d_k\circ d_{k-1}\circ\cdots\circ d_1)(b)=0.\]
\end{definition}

By~\cite[Proposition]{Skutin}, every Lie subalgebra $A$ of $\Der_\mathbb{L}(B)$ contained in $\LND_\mathbb{L}(B)$ forms a locally nilpotent subset of derivations. 
The following theorem, which is a consequence, is the key ingredient in the proof of Lemma~\ref{lm:L-structure}.

\begin{theorem}[{\cite[Theorem 6]{Skutin}}]\label{th:Skutin}
   Let $L\subset \Der_\mathbb{L}(B)$ be a Lie subalgebra contained in $\LND_\mathbb{L}(B)$ with $\ker L = \mathbb{L}$. Then there exist $x_1,\ldots,x_n\in B$ algebraically independent over~$\mathbb{L}$ such that $B = \mathbb{L}[x_1, \ldots, x_n]$ and 
   \[L \subseteq \bigoplus_{i=1}^n \mathbb{L}[x_1, \ldots , x_{i-1}]\frac{\partial}{\partial x_i}.\]
\end{theorem}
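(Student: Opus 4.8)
The plan is to induct on $n=\operatorname{trdeg}_{\mathbb{L}}B$. At each step I split off one ``leading coordinate'' $x_1\in B$ together with an LND $\partial_1\in A$ acting on it as $\partial/\partial x_1$, pass to $B_0=\ker\partial_1$ (over which $B$ becomes the polynomial ring $B_0[x_1]$), equip $B_0$ with a naturally induced Lie algebra $A_0\subset\LND_{\mathbb{L}}(B_0)$ with $\ker A_0=\mathbb{L}$, apply the induction hypothesis to $(B_0,A_0)$, and finally check that the triangular form on $B_0$ propagates back to $B$.

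For the base and the choice of $x_1$: if $n=0$ then $\Frac(B)$ is a separable algebraic extension of $\mathbb{L}$, so $\Der_{\mathbb{L}}(B)=0$, $A=0$, and $B=\ker A=\mathbb{L}$. If $n\ge 1$ then $A\ne 0$. Since $A$ is a Lie algebra of LNDs, it is a locally nilpotent set by \cite[Proposition]{Skutin}, so the filtration $\mathbb{L}=B_{\le 0}\subset B_{\le 1}\subset\cdots$ with $B_{\le m}=\{b\in B:\partial(b)\in B_{\le m-1}\ \forall\,\partial\in A\}$ exhausts $B$; as $B\ne\mathbb{L}$, a short descent along $A$ gives $B_{\le 1}\supsetneq\mathbb{L}$. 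I pick $x_1\in B_{\le 1}\setminus\mathbb{L}$, so $\partial(x_1)\in\mathbb{L}$ for all $\partial\in A$ while $x_1\notin\ker A$, and after rescaling choose $\partial_1\in A$ with $\partial_1(x_1)=1$. By the slice theorem for LNDs, $B=B_0[x_1]$ is a polynomial ring over $B_0=\ker\partial_1$ with $\partial_1=\partial/\partial x_1$ and $\operatorname{trdeg}_{\mathbb{L}}B_0=n-1$.

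The core is the induced structure on $B_0$. The map $\partial\mapsto\partial(x_1)$ is a Lie algebra homomorphism $A\to\mathbb{L}$, so $\mathfrak n:=\{\partial\in A:\partial(x_1)=0\}$ is an ideal and $A=\mathfrak n\oplus\mathbb{L}\partial_1$. Each $\partial\in\mathfrak n$ kills $x_1$, hence commutes with multiplication by $x_1$, so writes uniquely as $\partial=\sum_{j\ge 0}x_1^j\delta_j^{(\partial)}$ with $\delta_j^{(\partial)}\in\Der_{\mathbb{L}}(B_0)$ (extended by $\delta_j^{(\partial)}(x_1)=0$). I then check, in order: (i) $\operatorname{ad}(\partial_1)^j(\partial)\in\mathfrak n$ with constant-in-$x_1$ term $j!\,\delta_j^{(\partial)}$, so every $\delta_j^{(\partial)}$ is a ``$\delta_0$-part'' of an element of $\mathfrak n$; (ii) for $b\in B_0$ the $x_1$-constant term of $\partial^k(b)$ is $(\delta_0^{(\partial)})^k(b)$, which vanishes for $k\gg 0$, so $\delta_0^{(\partial)}\in\LND_{\mathbb{L}}(B_0)$, hence so are all the $\delta_j^{(\partial)}$; (iii) comparing constant terms in $[\partial,\partial']=\sum_{i,j}x_1^{i+j}[\delta_i^{(\partial)},\delta_j^{(\partial')}]$ shows $A_0:=\{\delta_0^{(\partial)}:\partial\in\mathfrak n\}$ is a Lie subalgebra of $\Der_{\mathbb{L}}(B_0)$ lying in $\LND_{\mathbb{L}}(B_0)$ and containing all the $\delta_j^{(\partial)}$; (iv) $\ker A_0=\mathbb{L}$: if $b_0\in B_0$ is killed by $A_0$, then $\partial(b_0)\in x_1B$ for all $\partial\in\mathfrak n$, and induction on $m$ (using $[\partial_1,\mathfrak n]\subset\mathfrak n$, $\partial_1(b_0)=0$, and that $B$ is a characteristic-zero domain) upgrades this to $\partial(b_0)\in x_1^mB$ for all $m$, whence $\partial(b_0)=0$ since $\bigcap_m x_1^mB=0$; together with $\partial_1(b_0)=0$ this forces $b_0\in\ker A=\mathbb{L}$. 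Now the induction hypothesis applied to $(B_0,A_0)$ gives algebraically independent $x_2,\dots,x_n\in B_0$ with $B_0=\mathbb{L}[x_2,\dots,x_n]$ and $A_0\subset\bigoplus_{i=2}^n\mathbb{L}[x_2,\dots,x_{i-1}]\,\partial/\partial x_i$, so $B=\mathbb{L}[x_1,\dots,x_n]$ with $x_1,\dots,x_n$ algebraically independent. Finally, for $\partial\in A$ write $\partial=\partial(x_1)\,\partial/\partial x_1+\tilde\partial$ with $\tilde\partial\in\mathfrak n$: here $\partial(x_1)\in\mathbb{L}$, and for $i\ge 2$ the finite sum $\tilde\partial(x_i)=\sum_j x_1^j\delta_j^{(\tilde\partial)}(x_i)$ has all terms in $\mathbb{L}[x_1]\cdot\mathbb{L}[x_2,\dots,x_{i-1}]$ since $\delta_j^{(\tilde\partial)}\in A_0$; hence $\partial(x_i)\in\mathbb{L}[x_1,\dots,x_{i-1}]$ for every $i$, i.e. $\partial\in\bigoplus_{i=1}^n\mathbb{L}[x_1,\dots,x_{i-1}]\,\partial/\partial x_i$, closing the induction.

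The main obstacle is step (iv), and more generally the construction of $A_0$: since the derivations in $A$ do not preserve $B_0$, one is forced to work with their $x_1$-constant parts, and the delicate points are that these still form a Lie algebra of LNDs and, above all, that passing to them does not enlarge the common kernel beyond $\mathbb{L}$ --- which is exactly what the $x_1$-adic descent driven by $\operatorname{ad}(\partial_1)$ secures. The remaining ingredients (the slice theorem, bookkeeping with derivations of a one-variable polynomial ring) are routine.
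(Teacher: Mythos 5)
First, a remark on the comparison itself: the paper offers no proof of this statement --- it is imported verbatim from \cite[Theorem 6]{Skutin} --- so your sketch can only be judged on its own merits. Most of it holds up: granting the element $x_1$, the decomposition $A=\mathfrak n\oplus\mathbb{L}\partial_1$, the passage to the $x_1$-constant parts $A_0\subset\LND_{\mathbb{L}}(B_0)$ via $\operatorname{ad}(\partial_1)$, the verification that $A_0$ is a Lie algebra of LNDs with $\ker A_0=\mathbb{L}$, and the propagation of the triangular form back to $B$ all check out. (Step (iv) is even simpler than you make it: by your step (iii) every $\delta_j^{(\partial)}$ already lies in $A_0$, so $b_0\in\ker A_0$ gives $\partial(b_0)=\sum_j x_1^j\delta_j^{(\partial)}(b_0)=0$ directly, with no $x_1$-adic descent.)

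The genuine gap is at the very first step, the existence of $x_1$. You claim that, since $A$ is a locally nilpotent set, the filtration $B_{\le m}$ exhausts $B$, and then descend to conclude $B_{\le 1}\supsetneq\mathbb{L}$. But the definition of a locally nilpotent set only controls each single infinite sequence of derivations; it does not give, for a fixed $b$, a bound uniform over all length-$m$ compositions, and the exhaustion claim is in fact false. Take $B=\K[x,y]$, $\mathbb{L}=\K$, and $A=\K[y]\frac{\partial}{\partial x}\oplus\K\frac{\partial}{\partial y}$: this is a Lie subalgebra of $\LND(B)$ with $\ker A=\K$ (and it even satisfies the theorem's conclusion with $x_1=y$, $x_2=x$), yet the length-$(m+1)$ composition $\left(\frac{\partial}{\partial y}\right)^{m}\circ\left(y^{m+1}\frac{\partial}{\partial x}\right)$ sends $x$ to $(m+1)!\,y\notin\K$, so $x$ lies in no $B_{\le m}$ and the filtration does not exhaust $B$. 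Consequently your descent has no starting point for such elements, and the statement you actually need --- that there exists $x_1\in B\setminus\mathbb{L}$ with $A(x_1)\subset\mathbb{L}$ --- is left unproven. This is not a cosmetic defect: for an infinite-dimensional Lie algebra of LNDs the existence of such a ``degree-one'' element is precisely what the theorem asserts about its first coordinate, i.e.\ it is the crux of Skutin's result, so the sketch as written assumes the hardest part while the remaining reduction is the routine half.
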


Lie subalgebras of $\Der(\cO(X))$ contained in $\LND(\cO(X))$ can be described as follows.

\begin{lemma}\label{lm:L-structure}
Let $L \subset \Der( \cO(X))$ be a Lie subalgebra contained in $\LND( \cO(X))$,
and let $A:=\ker L\subset\cO(X)$ denote the common kernel of all derivations in $L$. 
Then there exists a non-zero $h \in A$ and elements $x_1,\dots, x_k \in \cO(X)$ such that the following holds:
\begin{enumerate}[(i)]
    \item $A_h$ is finitely generated;
    \item $x_1,\ldots,x_k$ are algebraically independent over $A$ and
          \[  \cO(X)_h = A_h[x_1,\ldots,x_k];\]
    \item the subalgebra $A_h[x_1,\ldots,x_j]$ is stable under~$L$ for any $j=1,\ldots,k$;
    \item $L$ is a Lie subalgebra of $\jonq_u(k,A_h)$.
\end{enumerate}
\end{lemma}

\begin{proof} 
Since the field of fractions $\FF := \Frac(A)$ of $A$ is algebraically closed in $\K(X)$, 
$B := \FF \otimes_A \cO(X)$ is a domain. Then
$L_\FF := \FF \otimes_A L \subset \Der_\FF(B)$ is a Lie subalgebra contained in $\LND_\FF(B)$, and its kernel in $B$ is $\FF$.
By Theorem~\ref{th:Skutin}, there exist algebraically independent over $\FF$ elements $x_1,\dots,x_k$ of $B$ such that
\begin{equation}\label{eq:LF}
    B = \FF[x_1,\dots,x_k]
    \qquad\text{and}\qquad
    L_\FF \subseteq \bigoplus_{i=1}^k  \FF[x_1,\dots,x_{i-1}]\frac{\partial}{\partial x_i}.   
\end{equation}

Write a finite generating set of $\cO(X)$ as polynomials in $x_1,\dots,x_k$ over $\FF$, and let $g \in A$ be a common denominator of their coefficients.
By~\cite[Proposition~1.1]{OnYo82}, there exists a multiple $h\in (g) \subset A$ such that $A_h$ is finitely generated.
Then $ \cO(X)_h = A_h[x_1,\dots,x_k]$. So, we have (i) and (ii).
 
By \eqref{eq:LF}, every derivation $\partial \in L$ equals $\sum_j f_j \partial_{x_j}$ for some $f_j \in \FF[x_1,\dots,x_{j-1}]$.
Since $\partial\in\Der(\cO(X))$, we have
\[
f_j = \partial(x_j) \in \cO(X) \cap \FF[x_1,\dots,x_{j-1}]
     \subset A_h[x_1,\dots,x_{j-1}].
   \]
     This gives us (iii) and (iv). 
\qedhere
\end{proof}

\begin{corollary}
   Let $L \subset \Der( \cO(X))$ be a Lie subalgebra contained in $\LND( \cO(X))$.
   Then $L$ is a solvable Lie algebra and a locally nilpotent subset of $\Der(\cO(X))$.
\end{corollary}
\begin{proof}
   The assertion follows from the same fact for $\jonq_u(k,A_h)$, Lemma~\ref{lm:L-structure}, and~\cite[Proposition]{Skutin}.
\end{proof}

\subsection{Nested unipotent groups}\label{sec:nested-unip}
Here we establish a correspondence between Lie subalgebras in $\LND(\cO(X))$ and nested unipotent subgroups of $\Aut(X)$.

\begin{proposition}\label{pr:exp-nested}
   Let $L \subset \Der( \cO(X))$ be a Lie subalgebra contained in $\LND(\cO(X))$.
   Then $U:=\exp(L)$ is a nested unipotent subgroup of $\Aut(X)$. 
   Conversely, if $U$ is a nested unipotent subgroup of $\Aut(X)$, then $\log(U)$ is a Lie subalgebra contained in $\LND(\cO(X))$.
\end{proposition}
To prove this, we will need the following lemma.
\begin{lemma}\label{lm:locnilp-findim}
   Let $S\subset \LND(\cO(X))$ be a finite locally nilpotent subset.
   Then the Lie subalgebra $W\subset \Der(\cO(X))$ generated by $S$ is finite-dimensional.
\end{lemma}
\begin{proof}
Let $S=\{d_1, \ldots, d_s\}$, and let $f \in \cO(X)$ be arbitrary. 
Then there exists $N \in \mathbb{N}$ such that any composition of $N$ basis elements applied to $f$ yields zero. 
Indeed, assume otherwise that no such $N$ exists. 
Then for some $i_1\le s$, no such $N$ exists for $d_{i_1}(f)$. 
Repeating this argument indefinitely produces an infinite sequence $d_{i_1},d_{i_2}, \ldots$ such that $d_{i_N}\cdots d_{i_1}\cdot f$ is non-zero for any $N \in \NN$. This contradicts the local nilpotency of $S$.

Therefore, $W \cdot f$ is finite-dimensional for any $f \in \cO(X)$. Applying this observation to a finite system of generators of the algebra $\cO(X)$, we conclude that $W$ is finite-dimensional itself.   
\end{proof}
 
\begin{proof}[Proof of Proposition~\ref{pr:exp-nested}]
     Let us choose any ascending chain of finite-dimensional subspaces $V_1\subset V_2\subset\ldots\subset L$ such that $L=\bigcup_i V_i$.
    By Lemma~\ref{lm:locnilp-findim}, for each $i$ the minimal subalgebra $W_i$ of $L$ containing $V_i$ is finite-dimensional. 
    Then $\exp(W_i)$ is an algebraic unipotent subgroup and $W_i$ is contained in $W_{i+1}$ for each $i$. Thus, $U=\bigcup_i\exp(W_i)$ is a nested unipotent subgroup. 

    For the converse, suppose $U=\bigcup_i U_i$, where $U_1\subset U_2\subset\ldots$ is an ascending sequence of unipotent algebraic subgroups, then $\log(U)$ is a directed union of Lie subalgebras $\log(U_i)\subset\LND(\cO(X))$, hence itself a Lie subalgebra contained in $\LND(\cO(X))$.
\end{proof}  

\subsection{Closedness}
Here we show that a nested unipotent subgroup of $\Aut(X)$ is closed in both $\Aut(X)$ and $\End(X)$.

\begin{lemma}\label{lm:end-closed}
   Consider an affine domain $R$ over $\K$ endowed with the ind-variety structure of Section~\ref{sec:ind-ring}. 
   Let $\mathcal{F}\subset R$ be a (possibly infinite) subset, and for each $f\in \mathcal{F}$ let $S_f\subset R$ be a closed subset. 
   Then the subset
 \[ S := \{\phi\mid  \phi(f)\in S_f \text{ for all } f\in \mathcal{F} \},\]   
   consisting of all $\K$-endomorphisms of $R$ sending each $f$ into $S_f$, is closed in $\End(R)$.
\end{lemma}
\begin{proof}
   For each $f\in \mathcal{F}$, consider the map \[\psi_f\colon \End(R)\to R,\, \phi\mapsto \phi(f).\]
   It is a morphism of ind-varieties. 
   Then the preimage $\psi_f^{-1}(S_f)$ is a closed subset of $\End(R)$,
   and $S$ is the intersection of closed subsets $\psi_f^{-1}(S_f)$ for $f\in\mathcal{F}$.
\end{proof}

\begin{corollary}\label{cr:jonq-closed}
   Let $R$ be an affine domain over $\K$.
   Then the following hold.
   \begin{enumerate}[(i)]
      \item The ind-subgroup $\Jonq_u(k,R)$ is closed in $\End(R[x_1,\ldots,x_k])$.
      \item 
   The map $\exp\colon \jonq_u(k,R)\to \Jonq_u(k,R)$ is an isomorphism of ind-subvarieties. 
   In particular, the nested structure of $\Jonq_u(k,R)$ is equivalent as an ind-structure to the induced one.
   \end{enumerate}
\end{corollary}
\begin{proof}
   (i) We apply Lemma~\ref{lm:end-closed} to the set $\mathcal{F}=\{x_1,\ldots,x_k\}$,
   letting \[S_{x_i}:= x_i+ R[x_1,\ldots,x_{i-1}].\]

   (ii)  Consider a filtration by finite-dimensional subspaces $R[x_1,\ldots,x_k]=\bigcup_i R_i$ and for each $d\in\NN$ denote 
   \[
   V_d := \bigoplus_{i=1}^k \left(R[x_1,\ldots,x_{i-1}]\cap R_d\right)\frac{\partial}{\partial x_i} \subset \jonq_u(k,R).
   \]
   This is a finite-dimensional subspace. Hence, by Lemma~\ref{lm:locnilp-findim}, the Lie subalgebra $W_d$ generated by $V_d$ is also finite-dimensional. 
   The image $\exp(W_d)$ contains the intersection of $\Jonq_u(k,R)$ with the $d$th filtration element of $\End(R[x_1,\ldots,x_k])$, see Section~\ref{sec:ind-ring}.
   So, the ind-structure of $\Jonq_u(k,R)$ given by the filtration $\bigcup_i \exp(W_i)$ is equivalent to the one induced from $\End(R[x_1,\ldots,x_k])$.
\end{proof}

\begin{corollary}\label{cr:local-closed}
   Let $R$ be an affine domain over $\K$ and $h$ be a non-zero element of $R$.
   Then the semigroup $\End(R)\cap\End(R_h)\subset\End(\Frac(R))$ is closed in both $\End(R)$ and $\End(R_h)$.
\end{corollary}
\begin{proof}
   By \cite[Theorem~4.1.1]{FK}, $S:= (R_h^\times \cup\{0\})\cap R$ is a closed subset of $R$. 
   Since the intersection is defined in $\End(R)$ as $\{\phi\mid \phi(h)\in S\}$, 
   and in $\End(R_h)$ as $\{\phi\mid \phi(f)\in R \text{ for all } f\in R\}$,
    the statement follows from Lemma~\ref{lm:end-closed}.
\end{proof}

\begin{proposition}\label{pr:unipotent-closed}
   A nested unipotent subgroup $U$ of $\Aut(X)$ is closed in both $\End(X)$ and $\Aut(X)$.
    Moreover, its nested structure is equivalent as an ind-structure to the induced one.
\end{proposition}
\begin{proof}
   By Lemma~\ref{lm:L-structure}, $L:=\log U$ is a Lie subalgebra 
   of $\jonq_u(k,\cO(X)_h)$ for some $k>0$ and $h\in\ker L$. 
   Then the following inclusions are closed:
   \begin{itemize}
      \item $L\subset \jonq_u(k,\cO(X)_h)$, since $L$ is a subspace;
      \item $U \subset \Jonq_u(k,\cO(X)_h)\subset \End(X_h)$ by Corollary~\ref{cr:jonq-closed};
      \item $U\subset \End(X)\cap \End(X_h)$;
      \item $U \subset \End(X)$ by Corollary~\ref{cr:local-closed};
      \item $U \subset \Aut(X)$, since $\Aut(X)$ is locally closed in $\End(X)$ by~\cite[Theorem~5.2.1]{FK}.
   \end{itemize}
   The second assertion follows from the equivalence of ind-structures on $\End(X)\cap\End(X_h)$ induced from $\End(X)$ and $\End(X_h)$, which can be verified directly.
\end{proof}

\begin{remark}\label{rm:Ga-closure} 
   For an arbitrary unipotent subgroup $U$, the subset $\log(U)$ need not be a Lie subalgebra.
   However, if $\log(U)$ is a Lie ring, i.e., closed under addition and the Lie bracket, then by~\cite[Theorem~1.5]{BPZ} it is a locally nilpotent subset.
   Indeed, the assumption 
   \[\dim_{\K(X)}(\K(X)\cdot\Lie U)<\infty\]
    of loc.\ cit. is satisfied, since $\dim_{\K(X)}(\K(X)\cdot\Der\cO(X))$ is finite.
   In this case, $\K\cdot\log(U)$ is a Lie subalgebra of $\LND(\cO(X))$ by~\cite[Corollary~1]{Skutin}, and its exponent is a nested unipotent subgroup equal to the closure of $U$ by Proposition~\ref{pr:exp-nested}. 
\end{remark}

\subsection{Connected nested subgroups}
Let $G$ be a connected nested subgroup in $\Aut(X)$.
   We may assume that $G=\bigcup G_i$, where $G_i\subset G_{i+1}$ are closed embeddings of connected algebraic subgroups of $\Aut(X)$, e.g., see the proof of~\cite[Proposition~1.6.2]{FK}. 
   In particular, each $G_i$ acts regularly on $X$.
   
   \begin{theorem}[{\cite[Section 2.2]{KPZ}}]\label{th:LU}
   There is a decomposition $G = L\ltimes R_u(G)$, where $L$ is a maximal reductive algebraic subgroup in $G$ and $R_u(G)$ is the unipotent radical of $G$. Moreover, one may choose a filtration $G=\bigcup G_i$ so that $G_i=L\ltimes R_u(G_i)$ and $R_u(G_i)=R_u(G)\cap G_i$.
   \end{theorem}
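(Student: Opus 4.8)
The plan is to produce the decomposition by choosing Levi subgroups of the $G_i$ compatibly and then passing to the limit. Since $\operatorname{char}\K=0$, each connected algebraic group $G_i$ has a Levi decomposition $G_i=L_i\ltimes R_u(G_i)$ with $L_i$ a connected maximal reductive subgroup, and, by Mostow's theorem on fully reducible subgroups, every connected reductive subgroup of $G_i$ is contained in such an $L_i$. Using this I would build an ascending chain $L_0\subseteq L_1\subseteq\cdots$ of connected reductive subgroups with $L_i$ maximal reductive in $G_i$: take any Levi $L_0\subseteq G_0$, and given $L_i$, regard it as a connected reductive subgroup of $G_{i+1}$ and let $L_{i+1}$ be a connected maximal reductive subgroup of $G_{i+1}$ containing it.

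The first key point is that this chain stabilizes. Each $G_i$ acts faithfully on $X$, and a faithful torus action on an irreducible affine variety has generic orbits of dimension equal to the dimension of the torus, so every torus in $G_i$ has dimension at most $\dim X$; hence $\operatorname{rank}L_i\le\dim X$ for all $i$. A connected reductive group has dimension bounded in terms of its rank, so the dimensions $\dim L_i$ are bounded and the chain of connected subgroups $L_i$ stabilizes: $L_i=L$ for $i\ge N$, where $L$ is a fixed reductive algebraic subgroup which is maximal reductive in $G_i$ for every $i\ge N$. Discarding the first $N$ terms, we may assume $G_i=L\ltimes R_u(G_i)$ for all $i$, with the same $L$.

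Now I would show that the unipotent radicals are nested and that the Levi projections glue. Let $\pi_i\colon G_i\to L$ be the homomorphism with kernel $R_u(G_i)$ that restricts to the identity on $L$. Restricting $\pi_{i+1}$ to $G_i$ gives a surjection onto $L$ with kernel $N:=G_i\cap R_u(G_{i+1})$; thus $N$ is a normal unipotent subgroup of $G_i$ with $G_i/N\cong L$ reductive, so $R_u(G_i)\subseteq N$ (the unipotent radical is the smallest normal subgroup with reductive quotient), and then $N/R_u(G_i)$ is a unipotent normal subgroup of the reductive group $G_i/R_u(G_i)\cong L$, hence trivial. Therefore $R_u(G_i)=G_i\cap R_u(G_{i+1})$; in particular $R_u(G_i)\subseteq R_u(G_{i+1})$ and $\pi_{i+1}|_{G_i}=\pi_i$. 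The $\pi_i$ then assemble into a homomorphism $\pi\colon G\to L$ which is the identity on $L$, and I set $R_u(G):=\ker\pi=\bigcup_i R_u(G_i)$, a nested unipotent subgroup. Every $g\in G$ lies in some $G_i=L\ltimes R_u(G_i)$ and hence factors uniquely through $L$ and $R_u(G)$, giving $G=L\ltimes R_u(G)$. Here $L$ is maximal reductive in $G$: any reductive algebraic subgroup of $G$ is contained in some $G_i$ (the increasing family of closed subsets $H\cap G_i$ of an algebraic subgroup $H$ stabilizes by Noetherianity), hence cannot strictly contain $L$. Finally, iterating $R_u(G_i)=G_i\cap R_u(G_{i+1})$ gives $R_u(G_i)=G_i\cap R_u(G_j)$ for all $j\ge i$, whence $R_u(G)\cap G_i=R_u(G_i)$.

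I expect the stabilization step to be the crux: everything else is essentially formal once $L$ is fixed, but bounding the reductive parts $L_i$ uniformly relies on the fact that $\Aut(X)$ contains no torus of dimension larger than $\dim X$, together with the bound on the dimension of a reductive group in terms of its rank.
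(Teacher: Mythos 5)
The paper itself gives no proof of this statement --- it is quoted from \cite[Section 2.2]{KPZ} --- but your argument is the natural one and, as far as I can tell, the same strategy as the cited source: choose Levi--Mostow decompositions of the $G_i$ compatibly and stabilize the reductive parts. The substantive steps are sound. Mostow's theorem gives the chain $L_0\subseteq L_1\subseteq\cdots$ of Levi subgroups; faithfulness of each $G_i$-action on $X$ bounds the rank of $L_i$ by $\dim X$, hence the dimensions of the $L_i$ are bounded and the chain of connected groups stabilizes; and your computation $R_u(G_i)=G_i\cap R_u(G_{i+1})$ via the retractions $\pi_i$ is correct (using that a finite unipotent group is trivial in characteristic zero), so the $\pi_i$ glue to $\pi\colon G\to L$ and yield $G=L\ltimes\bigcup_i R_u(G_i)$ with the stated compatibility of the filtration.

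The one step that fails as written is the parenthetical justification of the maximality of $L$: you assert that the increasing chain of closed subsets $H\cap G_i$ of a reductive algebraic subgroup $H\subseteq G$ ``stabilizes by Noetherianity''. Noetherianity is the descending chain condition on closed subsets; increasing chains of closed subsets need not stabilize (think of growing finite subsets of $\AA^1$). The fact you need is true, but its proof uses the group structure: the identity components $(H\cap G_i)^\circ$ form an increasing chain of connected closed subgroups of $H$ of bounded dimension, hence stabilize to some $K$; then $K$ is normal in $H$, and $H/K$ is an affine algebraic group exhausted by the finite subgroups $(H\cap G_i)/K$, hence a torsion group, hence finite in characteristic zero (any positive-dimensional connected linear algebraic group contains a $\G_a$ or $\G_m$ and thus elements of infinite order); therefore $H\cap G_i=H$ for some $i$. (Over an uncountable field one may instead argue that an irreducible variety is not a countable union of proper closed subsets.) With this repair, any reductive algebraic subgroup $H\supseteq L$ lies in some $G_i$ with $i\ge N$, where $L$ is a maximal reductive subgroup, so $H=L$ and the argument is complete.
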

   
   In particular, $R_u(G)=\bigcup R_u(G_i)$ is a nested group filtered by algebraic unipotent subgroups. 
   
   \begin{theorem}\label{th:nested-closed}
      A connected nested subgroup $G\subset\Aut(X)$ is closed.
   \end{theorem}
   \begin{proof}
      Let us choose a filtration $G=\bigcup G_i$ as in Theorem~\ref{th:LU}.
   It is enough to prove that for any $d\in\NN$ there exists $k\in\NN$ such that
   \begin{equation}\label{eq:LU-findeg}
   G\cap \Aut(X)_{\le d}=G_k\cap \Aut(X)_{\le d}.
   \end{equation}
   Consider an element $g\in \Aut(X)_{\le d}$ and let $l\in L$, $u\in R_u(G)$ be such that $g=lu$. 
   Then $u\in L\cdot\Aut(X)_{\le d}$, which is an algebraic subset contained in $\Aut(X)_{\le j}$ for some $j$.
   Since $R_u(G)$ is closed in $\Aut(X)$ by Proposition~\ref{pr:unipotent-closed}(i), 
   and its nested ind-structure is equivalent to the induced one by Proposition~\ref{pr:unipotent-closed}(ii),
    there exists $k$ such that $R_u(G)\cap\Aut(X)_{\le j}=R_u(G_k)\cap\Aut(X)_{\le j}$. 
   So, $g=lu\in G_k$ and \eqref{eq:LU-findeg} holds.
   \end{proof}

\section{$\G_a$-Generated Unipotent Subgroups}\label{sec:non-nested}

In this section we discuss unipotent subgroups of $\Aut(X)$ generated by their $\G_a$-subgroups. 
We assume that the base field $\K$ is uncountable for the rest of the section.

      \begin{definition}\label{def:Ga-gen}
         We say that
         \begin{itemize}
            \item a subgroup $U\subset\Aut(X)$ is \emph{$\G_a$-generated} if it is generated by its $\G_a$-subgroups;
            \item a subset $M\subset \LND(\cO(X))$ is \emph{scaling-closed} if it is closed under scalar multiplication by $\K$. 
         \end{itemize}
      \end{definition}

      \begin{remark}\label{rm:Ga-gen}
         A subgroup of $\Aut(X)$ is $\G_a$-generated if and only if it is generated by an exponent of a scaling-closed subset of $\LND(\cO(X))$.   
      A unipotent subgroup of $\Aut(X)$ is $\G_a$-generated if and only if it is generated by its algebraic subgroups.
      \end{remark}

\begin{proposition}\label{pr:loc-nilp-logU}
   Let $M$ be a scaling-closed subset of $\LND(\cO(X))$ such that $\exp(M)$ generates a unipotent subgroup $U\subset\Aut(X)$.
   Then $M$ is locally nilpotent.
\end{proposition}

To prove this proposition, we need Lemmas~\ref{lm:logU-bracket} and~\ref{lm:Sk-1}.
We define the \emph{bracket closure} of a subset $L\subset\Der(\cO(X))$ to be the smallest subset of $\Der(\cO(X))$ that contains $L$ and is closed under the Lie bracket.
Note that $\log(U)$ itself is not necessarily closed under the Lie bracket.

\begin{lemma}\label{lm:logU-bracket}
   Let $L$ be the bracket closure of $M$.   
   Then any derivation in $L$ is locally nilpotent.
\end{lemma}
\begin{proof} 
   Given an arbitrary $\partial\in\LND(\cO(X))$, we extend it to a derivation of $\cO(X)[t]$ by letting $\partial(t)=0$.
    Then we have $t\partial\in\LND(\cO(X)[t])$ and $\exp(t\partial)\in\Aut(\cO(X)[t])$.

    An element $d\in L$ equals $F(a_1,a_2,\ldots,a_n)$ for some $a_1,a_2,\ldots,a_n\in M$ and some  expression $F$ in Lie brackets. Let $u_i=\exp(ta_i)\in\Aut(\cO(X)[t])$ for $i=1,\ldots,n$ and
    \begin{equation}\label{eq:c-iter-commutator}
      c=F(u_1,u_2,\ldots,u_n)\in \Aut(\cO(X)[t]),
    \end{equation}
    where $F$ is the same bracket expression, but brackets now denote taking commutators in the automorphism group, i.e., $[a,b]=aba^{-1}b^{-1}$ for $a,b\in\Aut(\cO(X)[t]).$ 
    We deduce by induction on $n$ that 
   \[
      c = 1+t^n d + h.o.t.
   \]
    where ``$h.o.t.$'' stands for higher order terms in $t$.
   Indeed, if $u_1,u_2\in\Aut(\cO(X)[t])$ are such that $u_i=1+t^{k_i}\partial_i+ h.o.t.$ 
   for $k_i>0$ and $\partial_i\in\Der(\cO(X))$, $i=1,2$, then we have
   \[
      [u_1,u_2] =1+ t^{k_1+k_2}[\partial_1,\partial_2]+h.o.t.
   \]

   If we substitute an element $\tau\in\K$ for $t$ in \eqref{eq:c-iter-commutator}, then $c$ becomes an element of $U$, which we denote by $c_\tau$. 
   In particular, $(c_\tau-\id)$ is a locally nilpotent operator on $\cO(X)$ for each $\tau\in\K$, see~\cite[Proposition~2.1.3]{Essen2000}. 
   
   Since $\K$ is uncountable,
    for any $f\in\cO(X)$ there exists $N=N(f)>0$ such that  $(c_\tau-\id)^N(f)=0$ for an infinite number of values $\tau$.
   Assume that $\cO(X)$ is generated by functions $f_1,\ldots,f_s$ and let 
   \[N=\max(N(f_1),\ldots,N(f_s)).\] 
   For any $i=1,\ldots,s$ the image $(c-\id)^N(f_i)\in\cO(X)[t]$ is a polynomial in $t$ that is equal to zero for an infinite number of values $\tau$ of $t$.
   Thus, the polynomial itself is zero, as well as its lower homogeneous component $(t^nd)^N(f_i)$. In other words, $d$ is an LND.
\end{proof}

\begin{remark}
The assumption that $\K$ is uncountable and $M$ is scaling-closed is used only in the proof of Lemma~\ref{lm:logU-bracket}. 
If one could establish Lemma~\ref{lm:logU-bracket} without these assumptions, it would follow that the closure of an arbitrary unipotent subgroup of $\Aut(X)$ is nested. 
In particular, a unipotent subgroup generated by an algebraic subset containing the identity would be algebraic, cf.~\cite{CRX}.
\end{remark}

The following lemma is a generalization of~\cite[Lemma 1]{Skutin}.
 
\begin{lemma}\label{lm:Sk-1}
   Let $T$ be a locally nilpotent %
   set of linear operators on a vector space $W$.
   Consider a subset $V\subset W$ such that $T(V)\subset V$
   and a subset $V'\subsetneq V$ that contains $0$.
Then we can find $v \in V \setminus V'$ such that $T(v)\subset V'$.
\end{lemma}
\begin{proof}
Assume the contrary and take some $v \in V \setminus V'$. Then there exist $A_1 \in T$ such that $A_1 v \notin V'$, $A_2 \in T$ such that $A_2 A_1 v \notin V'$ and so on.
Thus, we have an infinite sequence $A_1,A_2,\ldots\in T$ such that $A_k\cdots A_1 v\neq0$ for any $k>0$, which contradicts the local nilpotency condition.
\end{proof}

\begin{proof}[Proof of Proposition~\ref{pr:loc-nilp-logU}]
We proceed similarly to~\cite[Proposition]{Skutin}. 
Let
   $L$ be the bracket closure of  $M$, and 
   $S$ be a maximal locally nilpotent subset of $L$. 
Such $S$ exists due to~\cite[Corollary 2]{Skutin} and Zorn's Lemma. 
 Assuming that $S\neq L$, we arrive at a contradiction in three steps:
 \begin{itemize}
   \item By~\cite[Theorem 5]{Skutin}, $\text{ad}(S)$ is locally nilpotent on $\Der(\cO(X))$.
   \item Applying Lemma~\ref{lm:Sk-1} with 
   $T:=\text{ad}(S),$ $W:=\Der(\cO(X)),$ $V:=L,$ and $V':=S$,
   we can find $D \in L\setminus S$ such that $\text{ad}(S)(D)=[S,D] \subset S$.
   \item Since $D$ is locally nilpotent by Lemma~\ref{lm:logU-bracket}, we deduce from~\cite[Lemma 2]{Skutin} that the subset $S\cup\{D\}$ is locally nilpotent, a contradiction.
 \end{itemize}
 So, $M$ is contained in a locally nilpotent subset $S=L$, thus it is locally nilpotent itself. 
\end{proof}

In the following theorem we drop the solvability hypothesis in~\cite[Theorem~B]{KZ} for unipotent groups, since it holds automatically. 

\begin{theorem}
   \label{th:nested=closed}
   Assume that the base field $\K$ is uncountable and consider a unipotent subgroup $U$ of $\Aut(X)$. Then the following are equivalent:
   \begin{enumerate}[(i)]
      \item $U$ is $\G_a$-generated;
      \item $U$ is nested;
      \item $U$ is closed in $\Aut(X)$.
   \end{enumerate}
\end{theorem}

\begin{proof}
   Assume (i).
   Let $M$ be a scaling-closed subset of $\LND(\cO(X))$ such that $\exp(M)$ generates $U$.
   By Proposition~\ref{pr:loc-nilp-logU}, $M$ is locally nilpotent. 
   By~\cite[Corollary 1]{Skutin}, the Lie subalgebra $L$ generated by $M$ is contained in $\LND(\cO(X))$. 
   Then, by Proposition~\ref{pr:exp-nested}, $\exp(L)$ is a nested unipotent subgroup $U'$, which contains $\exp(M)$. 
   By~\cite[Theorem~C]{KZ}, the group $U'$ is solvable, hence $U\subset U'$ is also solvable.
   Now we have (ii) by~\cite[Theorem~B]{KZ}.

   Assume (ii). Then we have (iii) by Proposition~\ref{pr:unipotent-closed}.

   Assume (iii). Then for any $g\in U$ the corresponding $\G_a$-subgroup is also contained in $U$, and we have (i).
\end{proof}

\begin{corollary}\label{cor:unip-alggen}
   Let $U\subset \Aut(X)$ be a unipotent subgroup generated by a finite number of algebraic subgroups. Then $U$ is an algebraic subgroup.
\end{corollary}
\begin{proof}
   By Theorem~\ref{th:nested=closed}, $U$ is nested, so it admits a filtration $U=\bigcup_i U_i$ by algebraic subgroups. Since there are finitely many generating algebraic subgroups and the filtration is ascending, all of them are contained in a single $U_k$; hence $U=U_k$ is algebraic.
\end{proof}

\section{Maximal Unipotent Subgroups}\label{sec:max-unip}
In this section we describe maximal nested unipotent subgroups in terms of tuples of commuting LNDs and provide examples in small dimension.

\subsection{dJ-like subgroups}\label{sec:dJ-like}
Here we introduce, for an arbitrary affine variety $X$, analogues of the group $\Jonq_u(n,\K)\subset\Aut(\AA^n)$ using the following notion. 

\begin{definition}\label{def:width}
   We say that a subgroup $G\subset\Aut(X)$ is of \emph{width} $k$ if the dimension of the closure of a general $G$-orbit in $X$ equals $k$. 
   We say that a subgroup $H$ of a group $G\subset\Aut(X)$ is \emph{wide} (in $G$) if $H$ is of the same width as $G$.
\end{definition}

The following proposition summarizes the results of Sections~\ref{sec:ln-subsets} and~\ref{sec:nested-unip}.

\begin{proposition}\label{pr:U-wide-dJ}
   Let $U\subset\Aut(X)$ be a nested unipotent subgroup of width $k$.  Then there exists a principal affine $U$-invariant subset $X_h$ isomorphic to $\Spec(R)\times\AA^k$, where $R=\cO(X)^U_h,$ such that $U$ is a wide subgroup of $\Jonq_u(k,R)\cap\Aut(X).$
\end{proposition}
\begin{proof}
   By Lemma~\ref{lm:L-structure} applied to $L:=\log U$, there exist $h\in\cO(X)^U$ and $x_1,\ldots,x_k\in\cO(X)$ such that $\cO(X)_h=R[x_1,\ldots,x_k]$ for $R:=\cO(X)^U_h$, and $L\subset\jonq_u(k,R)$.
   Hence $X_h\cong\Spec(R)\times\AA^k$ and $U\subset\Jonq_u(k,R)\cap\Aut(X)$.
   Since both $U$ and $\Jonq_u(k,R)$ have width $k$, the subgroup $U$ is wide.
\end{proof}

Subgroups of the form $\Jonq_u(k,R)\cap\Aut(X)$ are in fact maximal among nested unipotent subgroups of the same width; see Definition~\ref{def:dJ-width} and Proposition~\ref{pr:dJ-like=Jdi}.

\begin{definition}\label{def:dJ-width}
   A nested unipotent subgroup $J\subset\Aut(X)$ is called \emph{de Jonqui\`eres-like} (or, for short, \emph{dJ-like}), if $J$ is maximal among nested unipotent subgroups of the same width as $J$.
\end{definition}

We describe dJ-like subgroups in terms of tuples of commuting derivations as follows.

\begin{definition}\label{def:gen-free-di}
   We say that a tuple of pairwise commuting locally nilpotent derivations $\partial_1,\ldots, \partial_k\in\LND(\cO(X))$, where $k\le\dim(X)$,
   is a \emph{generic frame} if the corresponding vector fields are linearly independent at the general point. 

   Given a generic frame $\partial_1,\ldots, \partial_k\in\LND(\cO(X))$, we further define:
   \begin{equation}\label{eq:D(di)}
      \mathcal{D}(\partial_1,\ldots,\partial_k):= \left(\bigoplus_{i=1}^k A_1^{-1}A_i\partial_i\right)\cap\Der(\cO(X)),  
   \end{equation}
   where $A_i=\bigcap_{j\ge i}\ker\partial_j\subset\cO(X)$ for $i=1,\ldots,k.$
   This is a Lie subalgebra of $\Der(\cO(X))$.
   We also denote:
   \[
      \J(\partial_1,\ldots,\partial_k):=\exp \mathcal{D}(\partial_1,\ldots,\partial_k).
   \]
   This is a unipotent subgroup of width $k$.
\end{definition}

\begin{proposition}\label{pr:dJ-like=Jdi}
   Let $k\le \dim(X)$ be a positive integer and let $L\subset\LND(\cO(X))$ be a subset. 
   Then the following conditions are equivalent:
   \begin{enumerate}[(i)]
      \item $L$ equals $\Lie J$ for a dJ-like subgroup $J\subset\Aut(X)$ of width $k$.
      \item $L$ equals $\mathcal{D}(\partial_1,\ldots,\partial_k)$ for a generic frame $\partial_1,\ldots,\partial_k$.
       \item There exist a subalgebra $A\subset\cO(X)$, a nonzero $h\in A$, and elements $x_1,\ldots,x_k\in\cO(X)$ satisfying conditions (i)--(ii) of Lemma~\ref{lm:L-structure}, such that
   \[L=\jonq_u(x_1,\ldots,x_k,A_h)\cap\LND(\cO(X)).\]
   Explicitly, $L$ is the set of derivations $\partial\in\LND(\cO(X))$ satisfying $\partial(A)=0$ and $\partial(x_i)\in A_h[x_1,\ldots,x_{i-1}]$ for $i=1,\ldots,k$.
   \end{enumerate}
\end{proposition}
\begin{proof}
   Assume (i).
   By applying Lemma~\ref{lm:L-structure} for $L=\Lie J$  and using the notation therein, we obtain
   \[
      \Lie J \subseteq \bigoplus_{i=1}^k A_h[x_1,\ldots,x_{i-1}]\frac{\partial}{\partial x_i}.
   \]
   Let us choose $d>0$ such that $\partial_i:=h^d \frac{\partial}{\partial x_i}$ belongs to $\Der(\cO(X))$ for all $i=1,\ldots,k$. Then $\mathcal{D}(\partial_1,\ldots,\partial_k)$ is a Lie algebra contained in $\LND(\cO(X))$ that contains $\Lie J$ and has the same kernel.
   Thus, $\J(\partial_1,\ldots,\partial_k)$ is a unipotent subgroup of $\Aut(X)$ that contains $J$ and is of the same width as $J$. 
   Since $J$ is maximal, we have (ii). 

   Assume (ii). 
   Since $\partial_1,\ldots,\partial_k$ are linearly independent at the general point, 
   there exists an element  $x_i\in\bigcap_{j\neq i}\ker\partial_j$ such that $x_i\notin\ker\partial_i$, where $i=1,\ldots,k$.

   Since $\partial_i$ and $\partial_j$ commute, $\partial_i(x_i)$ also belongs to $\bigcap_{j\neq i}\ker\partial_j$. 
   Applying $\partial_i$ to $x_i$ repeatedly, we may assume that $h_i:=\partial_i(x_i)\in A_1\setminus\{0\}$, i.e., that $x_i$ is a local slice of $\partial_i$.
   
   Now, $x_1,\ldots,x_k$ are algebraically independent, since the intersection of kernels of derivations is algebraically closed, see~\cite[Proposition 1.9.(d)]{Freudenburg}.
   By~\cite[Proposition 4.6]{DerksenLND}, we have 
   \[A_1^{-1}\cO(X)=\Frac(A_1)[x_1,\ldots,x_k].\]
   Moreover, $\partial_i$ equals $h_i\frac{\partial}{\partial x_i}$.
   Finally, taking $h$ as in the proof of Lemma~\ref{lm:L-structure} we obtain (iii).

   Assume (iii). Then $L$ is a Lie algebra and a locally nilpotent subset.
   It equals $\mathcal{D}(\partial_1,\ldots,\partial_k)$, where for each $i=1,\ldots,k$, we take $\partial_i=h^{d_i} \frac{\partial}{\partial x_i}$, with $d_i\in\NN$ chosen such that $\partial_i\in\Der(\cO(X))$.
   Assume that the nested unipotent group $U:=\exp(L)$ is not dJ-like.
   Then $U$ is strictly contained in some dJ-like subgroup $J$ of the same width as $U$.
   In particular, $\ker\Lie J$ equals $\ker L$.
   
   Consider an LND $\partial'\in\Lie J\setminus L$.
   Then $\partial'(x_j)\notin A_h[x_1,\ldots,x_{j-1}]$ for some $j\le k$, 
   and we assume that $j$ is a minimal index with such property. 
   Let us show that $L \cup\{\partial'\}$ is not a locally nilpotent set.

   For each $i=1,\ldots,k$ we consider a subset 
   \begin{equation}\label{eq:Si}
      S_i := (A_h\setminus\{0\})x_i + A_h[x_1,\ldots,x_{i-1}].
   \end{equation}
   By induction on the $x_i$-degree, applying elements of $L$ iteratively in the appropriate order sends any element of $A_h[x_1,\ldots,x_{i}]\setminus A_h[x_1,\ldots,x_{i-1}]$ to an element of $S_i$.
    On the other hand, one can send any element of $S_{i+1}$ to an element of $A_h[x_1,\ldots,x_{i}]\setminus A_h[x_1,\ldots,x_{i-1}]$ by applying $x_{i}^N\partial_{i+1}$ for some $N\gg0$.

    Let us also observe that $\partial'(y)$ does not belong to $A_h[x_1,\ldots,x_{j-1}]$ for any $y\in S_j$.
    Indeed, if we have $y=c x_j+P$, where $c\in A_h\setminus\{0\}$ and $P\in A_h[x_1,\ldots,x_{j-1}]$, then $\partial'(y)=c\partial'(x_j) + \partial'(P)$, where  $\partial'(P)$ belongs to $A_h[x_1,\ldots,x_{j-1}]$ and $\partial'(x_j)$ does not.
   By applying elements of $L$ iteratively in some order as indicated above, we can send $\partial'(y)$ back into some new $y\in S_j$.
   Thus, starting with $y:=x_j$ and repeating the procedure indefinitely, we
   conclude that $\Lie J$ is not a locally nilpotent subset, contradicting~\cite[Proposition]{Skutin}.
   We have (i).
\end{proof}

\begin{remark}\label{rm:local-slice-system}
   Given $\partial_1,\ldots,\partial_k$ as in Proposition~\ref{pr:dJ-like=Jdi}(ii), we may choose $x_1,\ldots,x_k$ as in Proposition~\ref{pr:dJ-like=Jdi}(iii) so that they comprise a \emph{system of local slices} of $\partial_1,\ldots,\partial_k$, i.e., $\partial_i(x_j)$ equals zero for $j\neq i$ and belongs to $A\setminus\{0\}$ for $j=i$. 
   The converse also holds, we may similarly choose $\partial_1,\ldots,\partial_k$ for given $x_1,\ldots,x_k$.
   See also~\cite[Proposition 3.27]{Freudenburg}.
\end{remark}

The following corollaries restate Proposition~\ref{pr:dJ-like=Jdi} in terms of unipotent subgroups.

\begin{corollary}\label{cor:dJ-like=Jdi}
   A dJ-like subgroup equals $\J(\partial_1,\ldots,\partial_k)$ for some generic frame $(\partial_1,\ldots,\partial_k)$. 
   Conversely, for any generic frame $(\partial_1,\ldots,\partial_k)$, the subset $\J(\partial_1,\ldots,\partial_k)\subset\Aut(X)$ is a dJ-like subgroup.
\end{corollary}
\begin{proof}
   This follows from Proposition~\ref{pr:dJ-like=Jdi}(i--ii).
\end{proof}

\begin{corollary}\label{cor:dJ-like-flag}
   Consider a dJ-like subgroup $J\subset\Aut(X)$ of width $k$. 
   There exists a coflag of $\AA^1$-fibrations of affine schemes
   \[
      X\stackrel{\pi_k}{\to} X_k\stackrel{\pi_{k-1}}{\to}\ldots\stackrel{\pi_1}{\to} X_1
   \]
   such that $J$ consists of all unipotent elements of $\Aut(X)$ that stabilize the coflag and act as the identity on $X_1$.
\end{corollary}
\begin{proof}
   By Proposition~\ref{pr:dJ-like=Jdi}, $J$ equals $\J(\partial_1,\ldots,\partial_k)$ for some generic frame $\partial_1,\ldots,\partial_k$. 
   Denote $A_i=\bigcap_{j\ge i}\ker \partial_j$ as in Definition~\ref{def:gen-free-di}
   and let $X_i:=\Spec A_i$ and $X_{k+1}:=X$.
   Then the natural map $\pi_i\colon X_{i+1}\to X_i$ is an $\AA^1$-fibration, since its general fiber is a $\G_a$-orbit of $\exp(\partial_i)$, for $i=1,\ldots,k$.

   By construction, elements of $J$ stabilize the coflag and are trivial on $X_1$.
    It therefore remains to show the converse: any unipotent $u\in\Aut(X)$ that stabilizes the coflag and is trivial on $X_1$ must belong to $J$.
   Let $\partial=\log(u)$. Then $A_1\subset \ker \partial$, and $A_i$ is stabilized by $\partial$ for all $i=1,\ldots,k$.

   Let $x_1,\ldots,x_k\in\cO(X)$ be a system of local slices of $\partial_1,\ldots,\partial_k$ as in Remark~\ref{rm:local-slice-system}.
   Let us show that $\partial(x_i)\in A_h[x_1,\ldots,x_{i-1}]$ for all $i=1,\ldots,k$.
   Since $\partial$ is an LND, there exists $P\in A_h[x_1,\ldots,x_i]\setminus A_h[x_1,\ldots,x_{i-1}]$ such that $\partial(P)\in A_h[x_1,\ldots,x_{i-1}]$.
   If $\deg_{x_i}(\partial(x_i))\ge2$, then $\deg_{x_i}(\partial(P))>\deg_{x_i}(P)$, a contradiction.
   So, we have $\partial(x_i)=c x_i + Q$ for some $c,Q\in A_h[x_1,\ldots,x_{i-1}]$.
   In particular, there exists $P=c'x_i+Q'\in S_i$, where $S_i$ as in \eqref{eq:Si}, such that $\partial(P)\in A_h[x_1,\ldots,x_{i-1}]$.
   Since  
   \[\partial(P)=(\partial(c')+cc')x_i+ c'Q+\partial(Q'),\]
we have $\partial(c')=-cc'\in (c')$. By~\cite[Principle~5]{Freudenburg}, we see that $\partial(c')=0$, hence $c=0$.
   The assertion follows.
\end{proof}

\begin{corollary}\label{cor:Ji-in-Jk}
   A dJ-like subgroup $\J(\partial_1,\ldots,\partial_k)$ contains the subgroup $\J(\partial_i,\ldots,\partial_k)$ for any $i\le k$.
\end{corollary}
\begin{proof}
   Let $X\to X_k\to\ldots\to X_1$ be the coflag as in Corollary~\ref{cor:dJ-like-flag}.
   Then $\J(\partial_i,\ldots,\partial_k)$ consists of unipotent elements that stabilize this flag and are trivial on $X_i$.
   Thus, it is contained in $\J(\partial_1,\ldots,\partial_k)$.
\end{proof}

In fact, $\J(\partial_1,\ldots,\partial_k)$ does not contain any other dJ-like subgroups, see Proposition~\ref{pr:J'<J}.
In order to prove it, we need the following definition.

   \begin{definition}\label{def:d-dJ-order}
      Let $J:=\J(\partial_1,\ldots,\partial_k)\subset \Aut(X)$ be a dJ-like subgroup.
      We define the \emph{order} of a nonzero LND $\partial\in\Lie J$ in $J$ to be the minimal $i$ such that $\partial\in\mathcal{D}(\partial_{k-i+1},\ldots,\partial_k)$.
      We denote by  $\ord_J\partial$  the order of $\partial$ in $J$.

      We say that an ordered tuple $(\partial_1',\ldots,\partial_s')$ of elements of $\Lie J$ is \emph{triangular} in $J$ if their orders in $J$ are $s,s-1,\ldots,1$ respectively.
   \end{definition}

    Definition~\ref{def:d-dJ-order} does not depend on the choice of $\partial_1,\ldots,\partial_k$ due to the following lemma. 

   \begin{lemma}\label{lm:d-ord}
      Consider an LND $\partial\in\Lie J$ of order $i$, where $J:=\J(\partial_1,\ldots,\partial_k)\subset \Aut(X)$ is a dJ-like subgroup.
      Then we have
      \[
         A_\partial:=\{f\in \cO(X)\mid f\partial\in \Lie J\}=\bigcap_{j\ge i}\ker\partial_j.
      \] 
      In particular, the subset $A_\partial$ is a subalgebra of $\cO(X)$ of transcendence degree $\dim X-i$.
   \end{lemma}
   \begin{proof}
      In the notation of \eqref{eq:D(di)},
       we have $\partial=\sum_{j=l}^k f_j\partial_j$ for some $f_j\in A_1^{-1}A_j$, where $l=k-i+1$ and $f_{l}\neq 0$.
      A regular function $f\in\cO(X)$ belongs to $A_\partial$ if and only if $ff_j\in A_1^{-1}A_j$ for all $j\ge l$.
      This is equivalent to the condition $ff_{l}\in A_1^{-1}A_{l}$.
      Then we have $A_\partial=A_l$ and the assertion follows.
   \end{proof}
   
\begin{proposition}\label{pr:J'<J}
   Consider dJ-like subgroups $J,J'\subset\Aut(X)$ of widths $k$ and $k'$.
   Write $J'=\J(\partial_1',\ldots,\partial_{k'}')$ for some generic frame $(\partial_1',\ldots,\partial_{k'}')$, which exists by Corollary~\ref{cor:dJ-like=Jdi}.
   
   Then we have $J'\subseteq J$ if and only if $(\partial_1',\ldots,\partial_{k'}')$ is a triangular tuple in $J$.
   Moreover, if $J'\subseteq J$, then we have $J'=\J(\partial_l,\ldots,\partial_k)$, where $l=k-k'+1$.
\end{proposition}
\begin{proof}
   Let $J=\J(\partial_1,\ldots,\partial_k)$.
   If  $(\partial_1',\ldots,\partial_{k'}')$ is triangular in $J$, then
   we have 
   \[
      \bigcap_{i=k-j}^k\ker\partial_i=\bigcap_{i=k'-j}^{k'}\ker\partial_i'
   \]
   for any $j=0,1,\ldots,k'-1$. Thus, $J'$ is contained in $\J(\partial_{k-k'+1},\ldots,\partial_k)$ by \eqref{eq:D(di)} and in $J$ by Corollary~\ref{cor:Ji-in-Jk}.

   Now assume that $J'$ is contained in $J$. 
   By Lemma~\ref{lm:d-ord}, $\ord_J\partial_i'\le \ord_{J'}\partial_i'$ for any $i=1,\ldots,k'$.
   If the inequality is strict for some $i$, then $(\partial_i',\ldots,\partial_{k'}')$ is not a generic frame, a contradiction.
   Thus, $(\partial_1',\ldots,\partial_{k'}')$ is triangular in $J$.

   So, $J'$ is a dJ-like subgroup of width $k'$ contained in a dJ-like subgroup $\J(\partial_{l},\ldots,\partial_k)$ of the same width $k'$, where $l=k-k'+1$.
   Thus, they coincide. 
\end{proof}

\subsection{Maximal unipotent subgroups: examples} \label{sec:max-ex}
Here we provide examples of maximal unipotent subgroups of different widths, including the case of a variety $X$ of dimension 2 or 3.

\begin{remark}\label{rm:k=n}
     Consider a dJ-like subgroup $J\subset\Aut(X)$ of width $n=\dim X$. 
     In this case, letting $J=\J(\partial_1,\ldots,\partial_n)$, $J$ contains the commutative algebraic unipotent subgroup $U=\exp(\langle \partial_1,\ldots,\partial_n\rangle_\K)$, which acts with an open orbit on $X$. 
     Then $U$ acts transitively on $X$ and we have $X\cong\AA^n$, e.g., see~\cite[Corollary~2.2]{GMM17}. 
     Thus, we have $J=\Jonq_u(n,\K)$ in coordinates $x_1,\ldots,x_n$ as in Proposition~\ref{pr:dJ-like=Jdi}, recovering Theorem~\ref{th:KZ-intro}.
   \end{remark}

   \begin{example}
      Let $X=\AA^2$ and consider a dJ-like subgroup $J=\J(\partial_1)\subset\Aut(\AA^2)$ for some $\partial_1\in\LND(\cO(\AA^2))$. By Rentschler's Theorem~\cite{Ren}, see also~\cite[Theorem 4.1]{Freudenburg}, after a suitable change of coordinates on $\AA^2$, we have $\partial_1=f(x)\frac{\partial}{\partial y}$ for some $f\in\K[x]$.
      Since $\partial_1$ and $\frac{\partial}{\partial y}$ have the same kernel, they define the same dJ-like subgroup, so $J=\J(\frac{\partial}{\partial y})$.
      Thus, any maximal nested unipotent subgroup of $\Aut(\AA^2)$ is conjugate to $\Jonq_u(2,\K)$.
   \end{example}     
        
\begin{example}
   Let $X$ be an affine surface not isomorphic to $\AA^2$.
   By Remark~\ref{rm:k=n}, every  dJ-like subgroup of $\Aut(X)$ is of width one, hence of the form $\J(\partial)$, where $\partial\in\LND(\cO(X))$. 
   In particular, all nested unipotent subgroups of $\Aut(X)$ are commutative.
\end{example}     

\begin{example}
   By~\cite[Proposition 5.40]{Freudenburg}, if two non-equivalent LNDs $D,E$ on $\K[x,y,z]$ commute, then up to an automorphism of $\K[x,y,z]$ we have $(\ker D) \cap (\ker E)=\K[x]$. In particular, $D$ and $E$ are of rank at most two, see the definition in~\cite[Section~3.2.1]{Freudenburg}. Indeed, as mentioned in loc.\ cit. after Proposition~5.40, this proposition indicates that a rank-three $\G_a$-action on $\AA^3$ cannot be extended to a $\G_a^2$-action, see also~\cite{DF}. 
   
   Thus, for any LND $\partial$ of rank three the dJ-like subgroup $\J(\partial)$ is maximal and of width one. For examples of rank-three LNDs of $\K[x,y,z]$, see~\cite{Freu98},~\cite[Chapter~5]{Freudenburg}, and~\cite{DasGa24}.
   In general, all commutative maximal dJ-like subgroups of $\Aut(\AA^3)$ can be described using the criterion from~\cite[Proposition~4.1]{DasGa24}.
\end{example}

\begin{example} 
   Let $X=\AA^3=\Spec\K[x,y,z]$ and consider the Nagata automorphism $\nu=\exp(f\partial)$, where $f=(xz-y^2)$ and $\partial=x \frac{\partial}{\partial y} + 2y \frac{\partial}{\partial z}$, see, e.g.,~\cite[Section 3.8.1]{Freudenburg}. 
   
   Since $\partial$ commutes with $\partial_z:=\frac{\partial}{\partial z}$, we have $\nu\in\J(\partial_z,\partial)$.
   If $\J(\partial_z,\partial)$ is not maximal, then
   there exists a dJ-like subgroup $\J'$ of width three that contains $\J(\partial_z,\partial)$. 
   On the other hand, $\J'$ is conjugate to $\Jonq_u(3)$ by Remark~\ref{rm:k=n}, so $\nu\in\Jonq(3,\K)$ up to conjugation, contradicting~\cite{Bass}.
      Thus, $\J(\partial_z,\partial)$ is a maximal nested unipotent subgroup of $\Aut(\AA^3)$ of width two. %

   The system of slices for $\partial_z, \partial$ is given by $\frac{f}{x},\frac{y}{x}$, so the corresponding localization is $\AA^3_x=\{x\neq0\}\subset \AA^3$. 
   Its direct product structure $\AA^3_x\cong \AA^1\setminus\{0\}\times\AA^2$ is given by the decomposition
   \[
      \cO(\AA^3_x)=\K[x,x^{-1},y,z]=\K[x,x^{-1}]\otimes\K\left[\frac{f}{x},\frac{y}{x}\right].
   \]
   So, we have
   \[
    \mathcal{D}(\partial_z,\partial)\subset
    \K[x,x^{-1}]\frac{\partial}{\partial z}
    \oplus
    \K[x,x^{-1},f]\partial.
   \]
\end{example}

\begin{remark}
   If $X=\AA^n$ and $\J=\J(\partial_1,\ldots,\partial_{n-1})$ with $\partial_{n-1}$ of rank at least two, then $\J$ is a maximal nested unipotent subgroup. Indeed, otherwise, up to conjugation, $\J$ is contained in $\Jonq_u(n,\K)$, and by Proposition~\ref{pr:J'<J}, $\partial_{n-1}$ is equivalent to $\frac{\partial}{\partial x_n}$.
\end{remark}

\begin{question}
   How can one describe dJ-like subgroups of $\Aut(X)$ of width $\dim X-1$?
\end{question}

\appendix

\section{Commutative Unipotent Subgroups}\label{sec:max-commut}
 In this appendix we describe connections between commutative unipotent subgroups, generic frames of derivations, and dJ-like subgroups.
We also discuss when a dJ-like subgroup is maximal.

\subsection{Maximal commutative unipotent subgroups}
   In~\cite[Theorem~A and Proposition~4.1]{RvS}, the authors prove that any maximal commutative unipotent subgroup of $\Aut(X)$ has the form $\mathrm{R}_X(U)$ for some algebraic commutative unipotent subgroup $U\subset\Aut(X)$, where 
   \[
      \mathrm{R}_X(U) = \langle \exp(f\partial)\in\Bir(X)\mid f\in\K(X)^U, \partial\in\Lie U\rangle\cap\Aut(X),
   \]
   where $\Bir(X)$ is the group of birational automorphisms of $X$.
      More generally, for an arbitrary commutative subset $S\subset\LND(\cO(X))$, we define $\mathrm{R}_X(S)$ by its Lie algebra: letting $K:=\ker_{\K(X)}S$,
      \begin{equation}\label{eq:RxU-def}
      \mathrm{R}_X(S):=\exp\left(\langle S\rangle_{K}\cap\Der(\cO(X))\right).     
      \end{equation}
   Then $\mathrm{R}_X(U)=\mathrm{R}_X(\log U)$.

\begin{lemma}\label{lm:comm-width}
   Let $U \subset \Aut(X)$ be a commutative unipotent subgroup. Then %
   \begin{enumerate}[(i)]
         \item the width of $U$ equals the dimension of the $\K(X)^U$-space $\langle\Lie U\rangle_{\K(X)^U}$;
      \item $U$ is a wide subgroup of $\mathrm{R}_X(U)$;
      \item we have $\mathrm{R}_X(U)=\mathrm{R}_X(V)$ for any wide subgroup $V \subset U$.
   \end{enumerate}
\end{lemma}
\begin{proof}
   The first assertion is immediate from the definition of width. Then the second one follows from \eqref{eq:RxU-def}.
   
   Let us show (iii). Since $V\subset U$ is wide, the invariant fields $\K(X)^U$ and $\K(X)^V$ coincide.
   Applying (i) to $U$ and $V$, we see that the $\K(X)^U$-subspaces spanned by $\Lie U$ and $\Lie V$ coincide.
\end{proof}

   \begin{proposition}\label{pr:max-commut}
      Let $U\subset \Aut(X)$ be a commutative unipotent subgroup of width $k$. 
      Then the following are equivalent.
      \begin{enumerate}[(i)]
         \item $U$ is maximal among commutative subgroups;
         \item $U$ equals 
         $\mathrm{R}_X(\{\partial_1,    \ldots,\partial_k\})$
         for a generic frame $\partial_1,\ldots,\partial_k\in\LND(\cO(X))$;
         \item there exist a $U$-invariant $h\in \cO(X)^U$ and an isomorphism $X_h \cong Y \times \AA^k$, where $Y=\Spec \cO(X)_h^U$ is affine, such that $U$ consists of all automorphisms of $X$ acting by translations on $\AA^k$-fibers of $X_h$.
      \end{enumerate}
   \end{proposition}
   \begin{proof}
      By assumption, $\log U$ contains $k$ LNDs, say $\partial_1,\ldots,\partial_k$, that are linearly independent as vector fields at a general point. Since $U$ is commutative, they form a generic frame.
      We denote $L = \langle\partial_1,    \ldots,\partial_k\rangle_\K$.

      By Lemma~\ref{lm:comm-width}, $\log U$ is contained in $L \otimes_\K \K(X)^U$. This gives the implication (i)~$\Rightarrow$~(ii).

      Let us take $h$ as in Lemma~\ref{lm:L-structure} applied to $L$. 
      Consider the map $X_h\to Y$, where $Y = \Spec(\cO(X)_h^U)$. 
      By Lemma~\ref{lm:L-structure}, $\cO(X)_h\cong\cO(X)_h^U[x_1,\ldots,x_k]$, so the fibers of $X_h\to Y$ are isomorphic to $\AA^k$. By~\eqref{eq:RxU-def}, automorphisms of $X$ acting by translations on these fibers are exactly elements of $\mathrm{R}_X(\{\partial_1,\ldots,\partial_k\})$, giving the implication (ii)~$\Rightarrow$~(iii).

      Finally, assume (iii) and consider an automorphism $g\in\Aut(X)$ that commutes with $U$. 
      By the choice of $h$ the subgroup $V:=\exp(L)$ acts transitively on the $\AA^k$-fibers of $X_h$.
      Since $g$ commutes with the subgroup
      $\exp(L\otimes_\K \cO(X)^U)\subset U$, we have $g(U x)=U (g x)$ for any $x\in X_h$.
      In particular, $g|_{U x}$ is an automorphism of $\AA^k$ that commutes with the subgroup of translations.
      Hence, $g$ is a translation on any $\AA^k$-fiber of $X_h$, so we have (i).
   \end{proof}

   \begin{remark}
       Let $U\subset \Aut(X)$ be a commutative unipotent subgroup  and $h$ be as in Lemma~\ref{lm:L-structure}. In the notation of~\cite[Section 10.4]{FK}, we have
      \[
         \mathrm{R}_X(U) = U(\cO(X)_h^U) \cap \Aut(X).
      \]
      In particular, the equivalence of (ii) and (iii) in Proposition~\ref{pr:max-commut} is a variant of~\cite[Proposition~10.4.12]{FK}.
   \end{remark}

\subsection{Maximal commutative subgroups in dJ-like subgroups}

\begin{proposition}\label{pr:R<J}
   Consider a maximal commutative unipotent subgroup $R\subset\Aut(X)$ and a dJ-like subgroup $J\subset\Aut(X)$.
   Then we have $R\subset J$ if and only if there exists
   a generic frame $(\partial_1',\ldots,\partial_s')$ triangular in $J$ such that $R=\mathrm{R}_X(\{\partial_1',\ldots,\partial_s'\})$.
\end{proposition}

The proof of Proposition~\ref{pr:R<J} relies on the following lemma on commuting derivations in the Lie algebra of a dJ-like subgroup.
\begin{lemma}\label{lm:commuting-pair}
   Consider a generic frame $(\partial_1,\ldots,\partial_k)$ and a pair of derivations $\partial,\partial'\in\mathcal{D}(\partial_1,\ldots,\partial_k)$. 
   Let $\partial=f_1\partial_1+\ldots+f_k\partial_k$ and $\partial'=f_1'\partial_1+\ldots+f_k'\partial_k$. 
   Then $\partial$ and $\partial'$ commute if and only if we have $\partial(f_j') = \partial'(f_j)$ for all $j=1,\ldots,k$.

   Moreover, assume that $[\partial,\partial']=0$ and $f_1'=\ldots=f_{l-1}'=0$ for some $l\le k$. Then $\partial(f_l')=0$.
\end{lemma}
\begin{proof}
  We have
   \[
      [\partial,\partial']=\left[\sum_{j=1}^k f_j\partial_j,\sum_{j=1}^k f_j'\partial_j\right]=\sum_{j=1}^k \left(
         \partial(f_j') - \partial'(f_j)
       \right)\partial_j.
   \]
   So, the first assertion follows.
   If we have $f_1'=\ldots=f_{l-1}'=0$, then $\partial'(f_l)$ is zero due to $f_l\in\bigcap_{j=l}^k \ker_{\K(X)}\partial_j$.
   The second assertion follows from the first one.
\end{proof}

\begin{proof}[Proof of Proposition~\ref{pr:R<J}]
   Assume that $R$ is contained in $J:=\J(\partial_1,\ldots,\partial_k)$ and let $s$ be the width of $R$.
   By Proposition~\ref{pr:max-commut}(ii), there exists a generic frame $(\partial_1',\ldots,\partial_s')$ such that $R=\mathrm{R}_X(\{\partial_1',\ldots,\partial_s'\})$.
   Then we have $\ord_J\partial_j'\ge k-s+1$ for each $j=1,\ldots,s$, that is, $R\subset \J(\partial_{k-s+1},\ldots,\partial_k)$. 
    Thus, we may assume that $k=s$.
   
   Let $\partial_i'=\sum_{j=1}^k f_{i,j}\partial_j$ for $i=1,\ldots,k$, where $f_{i,j}$ are as in \eqref{eq:D(di)}.
   We will transform the matrix $(f_{i,j})$ into the row echelon form by linear transformations of $\partial_1',\ldots,\partial_k'$ over the invariant field $\K(X)^J$.
   We can perform the first step of the Gaussian elimination algorithm, since $f_{i,1}$ belongs to $\K(X)^J$ for $i=1,\ldots,k$, and at least one of them is non-zero.
   
   Assume that we have performed $d-1$ steps of the Gaussian elimination over $\K(X)^J$. Namely, we have $f_{i,j}=0$ for $j<\min(i,d)$.
   We claim that
   \begin{equation}\label{eq:Gauss}
      \begin{cases}
         f_{i,i}\in \K(X)^J\setminus\{0\}&\text{ for }i<d, \\
         f_{i,d}\in \K(X)^J &\text{ for } i\ge d.
      \end{cases}
   \end{equation}
      Indeed, since $\partial_1',\ldots,\partial_k'$ form a generic frame, we have $f_{i,i}\neq0$ for $i<d$.
   By Lemma~\ref{lm:commuting-pair}, functions $f_{i,i}$ for $i<d$ and $f_{i,d}$ for $i\ge d$ are annihilated by $\partial_1',\ldots,\partial_k'$.
   Since exponents $\exp\partial_1',\ldots, \exp\partial_k'$ generate a wide subgroup of $J$, we have \[\bigcap_{i=1}^k \ker_{\K(X)}\partial_i'=\K(X)^J.\]
   Thus, we obtained \eqref{eq:Gauss} and we can proceed with the $d$th step of the Gaussian elimination.
   In the end, we arrive at a tuple that is triangular in $J$ and generates $R$, as required.

   The converse implication is immediate.
\end{proof}

\subsection{Maximality criterion for a dJ-like subgroup}

   \begin{proposition}\label{pr:dJ-max-criterion}
      A dJ-like subgroup $J\subset\Aut(X)$ of width $k$ fails to be a maximal nested unipotent subgroup if and only if 
      there exist commutative unipotent subgroups $G, H \subset \Aut(X)$ such that $H$ is a wide subgroup of $J$ and a non-wide subgroup of $G$.
   \end{proposition}
\begin{proof}
For the forward implication, assume that $J$ is not maximal.
Then there exists a dJ-like subgroup of width $k+1$ that contains $J$, say, $J':=\J(\partial_0,\ldots,\partial_k)$.
Thus, we have $J=\J(\partial_1,\ldots,\partial_k)$, and we may take $H:=\exp(\langle \partial_1,\ldots,\partial_k\rangle_\K)$ and $G:=\exp(\langle \partial_0,\ldots,\partial_k\rangle_\K)$.

Assume now that there exist $G,H$ as in the statement.
   We can find a generic frame $(\partial_1',\ldots,\partial_k')$ in $\log H$ and some $\partial_0'\in\log G$ that is linearly independent with $\partial_1',\ldots,\partial_k'.$
   By Proposition~\ref{pr:R<J}, there exists a generic frame $(\partial_1,\ldots,\partial_k)$ triangular in $J$ such that $R:=\mathrm{R}_X(\{\partial_1,\ldots,\partial_k\})$ equals $\mathrm{R}_X(\{\partial_1',\ldots,\partial_k'\})$.
   In particular, $J$ equals $\J(\partial_1,\ldots,\partial_k)$ by Proposition~\ref{pr:J'<J}.

      Since $\partial_0'$ commutes with $\partial_i'$, $i=1,\ldots,k$, it preserves $\K(X)^R$, and we have \[[\partial_0',\log R]\subseteq \log R.\]
   
   Let $\partial_0:=\partial_0'-\sum_{i=1}^k[\partial_0',\partial_i]$.
   Then $\partial_0$ belongs to $\J(\partial_0',\ldots,\partial_k')$ and is an LND that commutes with $\partial_1,\ldots,\partial_k$.
   We conclude that $J\subsetneq\J(\partial_0,\ldots,\partial_k)$, so $J$ is not maximal.
\end{proof}


\begin{thebibliography}{}
   \bibitem{Bass} H. Bass, \emph{A non-triangular action of $\G_a$ on $\AA^3$}, J. Pure Appl. Algebra \textbf{33} (1984), no. 1, 1--5.
   \bibitem{BPZ} O. Bezushchak, A. Petravchuk, and E. Zelmanov, \emph{Automorphisms and derivations of affine commutative and PI-algebras}, Trans. Amer. Math. Soc. \textbf{377} (2024), 1335--1356. \url{https://doi.org/10.1090/tran/9071}
   \bibitem{BriLum} M. Brion, \emph{Introduction to actions of algebraic groups}, Les cours du CIRM \textbf{1} (2010), no. 1, 1--22. \url{http://eudml.org/doc/116362}
   \bibitem{DaiLN} D. Daigle, \emph{Locally Nilpotent Sets of Derivations}, in: Kuroda, S., Onoda, N., Freudenburg, G. (eds) Polynomial Rings and Affine Algebraic Geometry. 
   PRAAG 2018. Springer Proceedings in Mathematics \& Statistics, vol 319. Springer, Cham, 2020. \url{https://doi.org/10.1007/978-3-030-42136-6_2}
   
   \bibitem{CRX} S. Cantat, A. Regeta, and J. Xie, \emph{Families of commuting automorphisms, and a characterization of the affine space}, Amer. J. Math. \textbf{145} (2023), no. 2, 413--434.
   
   \bibitem{DF} D. Daigle, G.~Freudenburg, \emph{Locally Nilpotent Derivations over a UFD and an Application to Rank Two Locally Nilpotent Derivations of $k[X_1,\ldots,X_n]$}, J. Algebra \textbf{204} (1998), no. 2, 353--371. \url{https://doi.org/10.1006/jabr.1998.7465}
   
   \bibitem{DasGa24} N. Dasgupta, S. Gaifullin, \emph{On locally nilpotent derivations of polynomial algebra in three variables}, Sb. Math. \textbf{216} (2025), no. 4, 456--484.
   
   \bibitem{DerksenLND} H. Derksen, \emph{Locally Nilpotent Derivations}, preprint (2014).
   
   \bibitem{Essen2000} A. van den Essen, \emph{Polynomial Automorphisms and the Jacobian Conjecture}, Birkh\"auser, Boston, 2000.
   
   
   \bibitem{Freudenburg} G.~Freudenburg, \emph{Algebraic theory
   of locally nilpotent derivations}, 2nd ed.
   Encyclopaedia Math. Sci., vol.\ \textbf{136}, Springer, Berlin, 2017.
   
   \bibitem{Freu98} G.~Freudenburg, \emph{Actions of $\G_a$ on $\AA^3$ defined by homogeneous derivations}, J. Pure Appl. Algebra \textbf{126} (1998), 169--181.
   
   \bibitem{FK} J.-P.~Furter, H.~Kraft, \emph{On the geometry of the automorphism groups of affine varieties}, preprint, arXiv:1809.04175 (2018). 
   
   
   

   \bibitem{GMM17} R.\,V.~Gurjar, K.~Masuda, M.~Miyanishi, 
      \emph{Unipotent group actions on projective varieties}, 
      in: Algebraic Varieties and Automorphism Groups, 
      Adv.\ Stud.\ Pure Math. \textbf{75},~119--162.
   
   
   \bibitem{KPZ} S.~Kovalenko, A.~Perepechko, and M.~Zaidenberg,
   \emph{On automorphism groups of affine surfaces}, in: Algebraic Varieties and Automorphism Groups, Adv. Stud. Pure Math. \textbf{75} (2017), 207--286. 
   
   \bibitem{KZ} H. Kraft, M. Zaidenberg, \emph{Algebraically generated groups and their Lie algebras}, J. Lond. Math. Soc. \textbf{109}:e12866 (2024), no. 2, 1--39. \url{https://doi.org/10.1112/jlms.12866}
   
   \bibitem{KZ24} H. Kraft, M. Zaidenberg, \emph{Automorphism groups of affine varieties and their Lie algebras}, in: Modern Algebra Volume 2: Groups and Algebras,
   Contemp. Math. \textbf{830} (2025), 187--217.
   
   \bibitem{Ku} S. Kumar, \emph{Kac--Moody Groups, Their Flag Varieties and Representation Theory},
      Progr. Math., Vol.\ 204, Birkh\"auser Boston Inc., Boston, MA, 2002.
   

   \bibitem{OnYo82} N. Onoda, K. Yoshida, \emph{On Noetherian subrings of an affine domain}, Hiroshima Math. J. \textbf{12} (1982), 377--384.
   
      \bibitem{PR1} A. Perepechko, A. Regeta, \emph{When is the automorphism group of an affine variety nested?},
      Transform. Groups \textbf{28} (2023), no. 1, 401--412. \url{https://doi.org/10.1007/s00031-022-09711-1}.
   
         
   
      \bibitem{Ram} C.~P.~Ramanujam, \emph{A note on automorphism groups of algebraic varieties},
      Math.\ Ann.\ \textbf{156} (1964), 25--33.
   
   \bibitem{RvS} A. Regeta, I. van Santen, \emph{Maximal commutative unipotent subgroups and a characterization of affine spherical varieties}, J. Eur. Math. Soc. (2025), published online first, arXiv:2112.04784v1.
   
   \bibitem{Ren} R. Rentschler, \emph{Op\'erations du groupe additif sur le plan affine}, C. R. Acad. Sci. Paris \textbf{267} (1968), 384--387.
   
   
      \bibitem{Skutin}
      A.~A.~Skutin, \emph{Maximal Lie subalgebras among locally nilpotent derivations}, Sb. Math. \textbf{212} (2021), no.~2, 265--271.
   
      \bibitem{Sh66}
      I.~R. Shafarevich, \emph{On some infinite-dimensional groups}, Rend. Mat. Appl. (5) \textbf{25} (1966), no.~1-2, 208--212.
   


      
   
   \end{thebibliography}
\end{document}